\documentclass{amsart}
\usepackage{graphicx} 
\newtheorem{theorem}{Theorem}
\newtheorem{conjecture}[theorem]{Conjecture}

\newtheorem{corollary}[theorem]{Corollary}

\newtheorem{lemma}[theorem]{Lemma}
\newtheorem{definition}[theorem]{Definition}
\usepackage{hyperref,xcolor}
\usepackage[T1]{fontenc}
\usepackage{pdfsync}
\DeclareMathSymbol{\lsim}{\mathord}{symbols}{"18}
\usepackage{tikz}
\usetikzlibrary{arrows.meta}

\title{On the records and zeros of a deterministic random walk}
\author{Henk Bruin \and Robbert Fokkink}
\email{henk.bruin@univie.ac.at \and r.j.fokkink@tudelft.nl}
\date{\today}

\begin{document}

\begin{abstract}
We settle two questions on sequence A120243 in the OEIS that were raised by Clark Kimberling 
and partly solve a conjecture of Van de Lune and Arias de Reyna. 
We extend Kimberling's questions to the
framework of deterministic random walks, automatic sequences, and linear recurrences. 
Our results indicate that there
may be a deeper connection between these structures. In particular, we conjecture that
the records of deterministic random walks
are $\xi$-Ostrowski automatic for a quadratic rotation number $\xi$. 
\end{abstract}
\keywords{deterministic random walk, record, zero, Pell automatic sequence} 
\subjclass[2020]{11B85, 11K38, 60J15} 

\maketitle

Sequence \href{https://oeis.org/A120243}{A120243} in the OEIS consists of numbers $n$ such that the fractional part $\{n \sqrt{2}\}$ is less than $\frac{1}{2}$. Its complementary sequence, denoted as $b(n)$, is \href{https://oeis.org/A120749}{A120749}. The table below presents the first eighteen elements of these sequences:

\begin{table}[h!]
\centering
\small
\begin{tabular}{|c|cccccccccccccccccc|}
\hline
$a(n)$ & 1 & 3 & 5 & 6 & 8 & 10 & 13 & 15 & 17 & 18 & 20 & 22 & 25 & 27 & 29 & 30 & 32 & 34 \\ \hline
$b(n)$& 2 & 4 & 7 & 9 & 11 & 12 & 14 & 16 & 19 & 21 & 23 & 24 & 26 & 28 & 31 & 33 & 36 & 38 \\ \hline
\end{tabular}
\\[2mm]
\caption{The first eighteen numbers from A120243 and A120749}\label{tbl:1}
\end{table}
\vspace{-0.4cm}

These sequences were entered into the OEIS by Clark Kimberling, who posed the question of whether the difference $b(n) - a(n)$ is positive for all $n$ and whether, for each integer $k$, there exist infinitely many values of $n$ such that $b(n) - a(n) = k$. We confirm both of these properties.
Kimberling's questions relate to the rotation of the unit circle by $\sqrt{2}$, which is often an initial case leading to broader mathematical results, such as in \cite{beck2010}. Our analysis of the sequences $a(n)$ and $b(n)$ extends to the more general framework of automatic sequences and ergodic theory.

We could also have described the sequences A120243 and A120749 by the parity of $\lfloor 2n\sqrt 2\rfloor$.
The sequence $a$ corresponds to the even numbers, and $b$ corresponds to the odd numbers.
Or alternatively, if instead of parity we use signs by putting
$(-1)^{\lfloor 2n\sqrt 2\rfloor}$, then we can think of the
sequences $a$ and $b$ as the steps in the
positive direction and the negative direction, respectively, in what is known as, rather curiously, a
\emph{deterministic random walk}
\begin{equation}\label{eq:1}
  S_n(\xi)=\sum_{j=1}^n (-1)^{\lfloor j\xi\rfloor},
\end{equation}
for $\xi=2\sqrt 2$. 
If we move from fractional parts $\{n\xi\}$ to deterministic random walks $S_n(2\xi)$,
then we double $\xi$.
Kimberling's question whether $b(n)-a(n)$ is positive is
equivalent to the question whether $S_n(2\sqrt 2)$ is non-negative.
This problem closely resembles Problem B6 from the 81st William Lowell Putnam Mathematical Competition, which requires proving that \( S_n(\sqrt{2} - 1) \) is non-negative, see \cite{bhargava}.

A number $n$ is a \emph{record} of the deterministic random walk
if none of the previous
partial sums (including zero) is equal to $S_n(\xi)$. 
Sequence
\href{https://oeis.org/A123737}{A123737} in the OEIS
contains the partial sums of the deterministic random walk 
for $\xi=\sqrt 2$.
In a comment on A123737, V\'{a}clav 
Kot\v{e}\v{s}ovec asked if the 
records satisfy a certain recurrence relation.
Van de Lune and Arias de Reyna~\cite{lune2008}
conjecture that there is a system of recurrence relations for
the records of the deterministic random walk $S_n(\xi)$
for all
quadratic irrationals $\xi$.
Their work extends earlier results of O'Bryant et al~\cite{obryant2006}.
We confirm Kot\v{e}\v{s}ovec's recurrence and 
settle some instances of the conjecture.

A number $n$ is a \emph{zero} of the deterministic random walk
if $S_n(\xi)=0$.
Deterministic random walks
$S_n(\xi)=\sum_{j=1}^n (-1)^{\lfloor x + j\xi\rfloor}$ (with offset $x\in[0,1)$)
were introduced by Aaronson and Keane~\cite{aaronson1982visitors},
and their primary interest was an estimate of the asymptotic number of zeros
as $n$ goes to infinity for a generic number~$x$. 
More specifically, if $N_n$ is the number of zeros up to $n$,
then it can be interpreted as a random variable depending on a 
uniformly random $x$. The asymptotic mean and variance of $N_n$
remain a subject of ongoing research~\cite{avila2015visits, bruin2024}.

\section{Irrational rotations and automata}

The sequences $a$ and $b$ arise from an irrational rotation of
the unit circle. The number $a(n)$ marks the $n$-th return to the
semi-circle $[0,\frac 12)$ under the rotation $x\mapsto x+\sqrt 2$,
starting from $x=0$,
and $b(n)$ is the $n$-th return to its complement $[\frac 12,1)$.
According to the ergodic theorem, both $\lim\frac {a(n)}n$
and $\lim\frac {b(n)}n$ equal two, which is suggested already by the first
eighteen entries in
Table~\ref{tbl:1}. 
Given this, the next point of interest is 
the behavior of the differences $a(n)-2n$ and $b(n)-2n$.
Such deviations from the mean are known as \emph{discrepancies}, which is a significant topic in the study of sequences ~\cite{drmota2006}. Kimberling’s question regarding whether 
$b-a$ assumes every positive number infinitely often falls within this topic.
His other question on the signature of $b-a$ is equivalent to asking whether
$a(n)-2n$ is negative and $b(n)-2n$ is non-negative.

For $N\in\mathbb N$ an interval $I\subset (0,1)$ and $\xi\in (0,1)$, 
let $\ell(I)$ be the length of $I$ and let $N(\xi,I)$ be the cardinality of
$\{n\colon \{n\xi\}\in I,\ n\leq N\}$. Kesten~\cite{kesten1966} famously proved that
\begin{equation}\label{discrep}
    \limsup_{N\to\infty} |N(\xi,I)-\ell(I)\cdot N|<\infty
\end{equation}
if and only if $\ell(I)=\{m\xi\}$ for some $m\in\mathbb N$.    
If we apply this to $\xi=\{\sqrt 2\}$ and $I=[0,\frac 12)$, then we get that
the difference between $N/2$ and the number of $\{n\xi\}<\frac 12$ up to $N$
is unbounded. This implies that $a(n)-2n$ and $b(n)-2n$ are both unbounded.

S\'os~\cite{sos1957} observed that the discrepancy 
$N(\xi,I)-\ell(I)\cdot N$ may be bounded on one side and unbounded on the other.
Recall the (regular) continued fraction of a real number $\xi$
\[
\xi = a_0+\cfrac{1}{a_1+\cfrac{1}{a_2+\cfrac{1}{a_3+\cfrac{1}{\ddots}}}}
\]
The coefficients $a_i$ are the partial quotients and the finite expansions
$\frac{p_n}{q_n}$ are the convergents, starting from $\frac{p_0}{q_0}=\frac {a_0}1$.
Dupain and S\'{o}s~\cite{dupain1980} gave a necessary and sufficient condition on $I=[0,\beta)$ and $\xi$ for one-sided
boundedness, for $\xi$ with bounded partial quotients (such as quadratic irrationals).
Boshernitzan and Ralston~\cite{boshernitzan2009} found an elegant condition for 
nonnegative discrepancy in terms of the convergents
of $\xi$.
\begin{theorem}[Boshernitzan and Ralston]\label{thm:BR}
    Let $I=[0,\frac hk)$ and let $\frac {p_n}{q_n}$ be the convergents of $\xi$.
    The discrepancy $N(\xi,I)-\ell(I)\cdot N$ is nonnegative if and only if $k\ |\ q_{2n+1}$ for all $n$.
\end{theorem}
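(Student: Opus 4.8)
The plan is to combine the Ostrowski numeration of the integers with the three--distance theorem. Normalise so that $\xi\in(0,1)$; let $p_j/q_j$ be its convergents and set $\theta_j:=\|q_j\xi\|=(-1)^j(q_j\xi-p_j)>0$, so that the $\theta_j$ decrease to $0$, satisfy $\theta_{j-1}=a_{j+1}\theta_j+\theta_{j+1}$ and $q_j\theta_{j-1}+q_{j-1}\theta_j=1$, and $q_j\xi\equiv(-1)^j\theta_j\pmod 1$. Write $\beta=h/k$ in lowest terms, $\phi:=\mathbf{1}_{[0,\beta)}-\beta$, and $D_N(t):=\sum_{n=1}^{N}\phi(\{t+n\xi\})$, so that the discrepancy of the statement is $D_N:=D_N(0)$. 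Since $\gcd(h,k)=1$, the hypothesis ``$k\mid q_{2n+1}$ for all $n$'' is equivalent to ``$q_{2n+1}\beta\in\Z$ for all $n$''; it is the latter form we exploit.

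The engine is a \emph{block lemma}. Every $N\ge 1$ has a unique Ostrowski expansion $N=\sum_{j\ge 0}c_{j+1}q_j$ with the usual admissibility conditions, and peeling off the top digit by means of $q_J\xi\equiv(-1)^J\theta_J$ yields
\[
 D_N=\sum_{\ell=0}^{c_{J+1}-1}\sigma_J\!\big((-1)^J\ell\,\theta_J\big)+D_{N'}\!\big((-1)^J c_{J+1}\theta_J\big),\qquad N'=N-c_{J+1}q_J<q_J,
\]
where $\sigma_j(t):=\sum_{n=1}^{q_j}\phi(\{t+n\xi\})=\#\{1\le n\le q_j:\{t+n\xi\}\in[0,\beta)\}-q_j\beta$ is the sum over one block of length $q_j$. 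Iterating down the expansion writes $D_N$ as a sum of block contributions at the levels occurring in the expansion, each evaluated at a small offset whose sign alternates with the level. By the three--distance theorem the $q_j$ points $\{t+n\xi\}$ cut the circle into arcs of only the two lengths $\theta_{j-1}$ and $\theta_{j-1}+\theta_j$, and their Sturmian arrangement keeps any run of $m$ consecutive arcs within $\theta_j$ of $m/q_j$; hence, once $j$ exceeds a threshold $J_0(\beta)$ (namely once $\theta_{j-1}+\theta_j<\min(\beta,1-\beta)$), every arc of length $\beta$ meets exactly $\lfloor q_j\beta\rfloor$ or $\lceil q_j\beta\rceil$ of the points, so $\sigma_j(t)\in[-\{q_j\beta\},\,1-\{q_j\beta\}]$ for all $t$. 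For the odd levels singled out by the hypothesis, $q_j\beta$ is an integer, and since $q_j\theta_j<1$ the count is then forced to equal $q_j\beta$ exactly: \emph{every odd--level block of index $\ge J_0(\beta)$ contributes nothing}. The alternation $q_j\xi\equiv(-1)^j\theta_j$ moreover makes the surviving even--level offsets positive, hence biased towards $[0,\beta)$.

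For the ``if'' direction one argues by induction from coarse to fine scales on the partial sums $D_M(t)$, $M\le N$, the invariant being that the running sum never drops below the negative of the surplus already secured from strictly coarser levels. The two ingredients are: (i)~a full even--level block is nonnegative on average, with partial sums whose downward excursion is bounded via the block lemma; and (ii)~a full odd--level block of index $\ge J_0(\beta)$ is null, hence neutral, so that the surplus banked at scale $q_{j+2}$ absorbs the worst excursion inside a level--$j$ block. Morally this induction is the renormalisation $(\xi,\beta)\mapsto(\xi'',\beta'')$ obtained by inducing the rotation onto a shrinking interval until two Gauss steps have been performed on the continued fraction: two steps preserve the parity pattern, and hence preserve ``$q_{2n+1}\beta\in\Z$ for all $n$'', while for a quadratic $\xi$ together with $\beta\in\Q$ the renormalisation dynamics is eventually periodic, so one reduces to finitely many base cases checked directly. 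I expect this propagation step to be the main obstacle: one must make the ``bounded excursion'' quantitative and, separately, dispose of the finitely many exceptional low levels $j<J_0(\beta)$ where the block lemma is not yet available --- and it is precisely here that the signs $(-1)^j$, not merely size estimates, are indispensable.

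For the converse, suppose $q_{2m+1}\beta\notin\Z$ and take $m$ minimal. Since all lower odd levels are integral, the ``if''-direction analysis applies to the levels below $2m+1$ and yields indices $N$ with $D_N\ge 0$ small while $\{N\xi\}$ ranges densely over the circle; on the other hand, by the sharp form of the block lemma $\sigma_{2m+1}$ attains the value $-\{q_{2m+1}\beta\}<0$ on a whole arc $A$ (both extreme values being attained, as $\int\sigma_{2m+1}=0$). The offsets $\{N\xi\},\{(N+q_{2m+1})\xi\},\dots$ advance through $A$ in steps of the small quantity $-\theta_{2m+1}$, so appending to a suitably chosen such $N$ a sufficiently long run of length-$q_{2m+1}$ blocks subtracts strictly more than the available surplus, forcing $D_{N'}<0$ for some $N'$. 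Hence $D_N\ge 0$ for all $N$ implies $q_{2n+1}\beta\in\Z$, i.e.\ $k\mid q_{2n+1}$, for every $n$, which completes the characterisation.
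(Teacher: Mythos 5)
The paper does not prove this statement: it is quoted as a known theorem with a citation to Boshernitzan and Ralston, so there is no in-paper argument to compare yours against, and your proposal must stand on its own. Your general architecture (Ostrowski block decomposition plus the three-distance theorem, i.e.\ renormalisation of the rotation) is indeed the standard framework for Kesten-- and Dupain--S\'os--type results, but the one step you carry out in detail is false, and the step you identify as the crux is left undone.

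The false step is the sharp block estimate. You claim that once $\theta_{j-1}+\theta_j<\min(\beta,1-\beta)$, every arc of length $\beta$ meets exactly $\lfloor q_j\beta\rfloor$ or $\lceil q_j\beta\rceil$ of the $q_j$ points, so that $k\mid q_j$ forces $\sigma_j\equiv 0$ and odd-level blocks contribute nothing. Counterexample: $\xi=\sqrt2-1$, $\beta=1/2$, $j=3$, so $q_3=12$ and $\theta_2+\theta_3\approx 0.0711+0.0294=0.1005<1/2$. The twelve points $\{n\xi\}$, $1\le n\le 12$, sorted, are
$0.0711,\,0.1421,\,0.2426,\,0.3137,\,0.4142,\,0.4853,\,0.5564,\,0.6569,\,0.7279,\,0.8284,\,0.8995,\,0.9706$,
and the arc $[0.48,0.98)$ of length $1/2$ contains seven of them, not six. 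Hence $\sigma_3$ attains $+1$ (and, since it integrates to zero, also $-1$); the odd-level blocks do \emph{not} vanish identically, and their $\pm1$ fluctuations are precisely the quantity the theorem is about. Your inference fails because a run of $m$ consecutive gaps can be \emph{shorter} than $m/q_j$ by up to $\theta_j$, which lets one extra point into a well-placed arc of rational length. What is true, and what any proof along these lines must establish, is that $\sigma_j$ has a controlled sign at the \emph{specific} offsets $(-1)^j\ell\theta_j$ (plus accumulated carries) produced by the Ostrowski peeling; that sign analysis is the actual content of the theorem and is absent from the proposal.

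Second, even granting a corrected block lemma, the ``if'' direction is explicitly deferred: the propagation of nonnegativity to partial sums inside blocks, the finitely many coarse levels $j<J_0(\beta)$, and the positivity of the even-level contributions are all acknowledged as ``the main obstacle'' or asserted only as ``biased towards $[0,\beta)$''. Since nonnegativity for \emph{all} $N$ is exactly the assertion to be proved, and the converse direction also leans on the same sharp block lemma, the proposal is a plausible programme rather than a proof.
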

In particular, if the odd convergents $q_{2n+1}$ of $\xi$ are even then the number
of $\{n\xi\}$ in $[0,\frac 12)$ up to $N$ exceeds, or is equal to, the number
of $\{n\xi\}$ in $[1/2,1)$. This is equivalent to the non-negativity of $S_n(2\xi)$.
The denominators $q_n$ of the convergents of $\sqrt 2$, starting from $q_0=1$, 
are the Pell numbers $1, 2, 5, 12, \ldots$,
entry \href{https://oeis.org/A000129}{A000129} in the OEIS. The $q_n$ are even for odd $n$ and so the discrepancy is nonnegative by Theorem~\ref{thm:BR}. For each $N$ at least half of the iterates $\{n\sqrt 2\}$ are in $[0,\frac 12)$.
It follows that $a(n)-2n$ is negative and $b(n)-2n$ is nonnegative for all $n$, and therefore $b-a$ is positive.
This settles one of the two questions of Kimberling.
\begin{corollary}
    The sequence $b-a$ is positive.
\end{corollary}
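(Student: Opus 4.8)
The plan is to read the corollary off Theorem~\ref{thm:BR}. Apply it with $\xi=\sqrt2$ and $I=[0,\frac12)$, so that in the notation of the theorem $h=1$ and $k=2$; the hypothesis to verify is that $2\mid q_{2n+1}$ for every $n\ge0$, where $q_n$ is the denominator of the $n$-th convergent of $\sqrt2$. Since $\sqrt2=[1;2,2,2,\dots]$, these denominators are the Pell numbers and obey $q_0=1$, $q_1=2$ and $q_{n+1}=2q_n+q_{n-1}$. Reducing the recurrence modulo $2$ gives $q_{n+1}\equiv q_{n-1}\pmod 2$, so the parity is $2$-periodic: $q_{2n}$ is odd and $q_{2n+1}$ is even for all $n$. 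Hence the hypothesis holds, and Theorem~\ref{thm:BR} yields that the discrepancy $N(\sqrt2,[0,\frac12))-\frac12 N$ is nonnegative for every $N$.

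It remains to translate this into a statement about $a$ and $b$. Write $A(N)$ for the number of $m\le N$ with $\{m\sqrt2\}<\frac12$, so that $A(N)=N(\sqrt2,[0,\frac12))\ge N/2$, and let $B(N)=N-A(N)$ be the complementary count, which then satisfies $B(N)\le N/2$ (irrationality of $\sqrt2$ ensures no orbit point hits the boundary $\frac12$, so $A$ and $B$ genuinely partition $\{1,\dots,N\}$). By definition $a(n)$ is the $n$-th integer counted by $A$ and $b(n)$ the $n$-th counted by $B$, so $A(a(n))=n$ and $B(b(n))=n$. From $B(b(n))\le b(n)/2$ we get $b(n)\ge 2n$. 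For the $a$-side I would evaluate $A$ one step earlier: because $a(n)$ is itself an $a$-type integer, $A(a(n)-1)=n-1$, and $n-1=A(a(n)-1)\ge(a(n)-1)/2$ forces $a(n)\le 2n-1$. Thus $a(n)-2n<0$ and $b(n)-2n\ge0$, and in particular $a(n)\le 2n-1<2n\le b(n)$, whence $b(n)-a(n)\ge1$ for every $n$.

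There is no genuine obstacle here once Theorem~\ref{thm:BR} and the parity of the Pell numbers are in hand; the only point demanding care is the bookkeeping that upgrades the weak inequality $A(N)\ge N/2$ to the strict separation $a(n)<b(n)$, which is precisely why one should evaluate the counting function at $a(n)-1$ rather than at $a(n)$. Everything else is immediate.
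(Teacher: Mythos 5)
Your proof is correct and follows the same route as the paper: both apply Theorem~\ref{thm:BR} with $I=[0,\tfrac12)$ and the parity of the Pell denominators to get nonnegative discrepancy, then conclude $a(n)\le 2n-1<2n\le b(n)$. The only difference is that you spell out the final counting-function bookkeeping (evaluating at $a(n)-1$) that the paper leaves implicit.
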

The odd convergents are even if and only if the odd partial quotients $a_{2n+1}$ are even.
Problem B6 of the 2021 Putnam Competition asks one to prove that $S_n(\sqrt 2 -1)$ is non-negative.
The continued
fraction expansion of $\frac{\sqrt 2 -1 }{2}$ is $[0;\overline{4,1}]$ where the bar indicates that
these coefficients are repeated. The partial quotients $a_{2n+1}$ are all equal to $4$ so that
Theorem~\ref{thm:BR}
settles problem B6.

The Pell numbers $P_n$ form the basis of a numeration system~\cite[Ch 3.4]{shallit2022}.
They satisfy the recurrence $P_{n+1}=2P_n+P_{n-1}$ starting from $P_0=0,\ P_1=1$.
Please note that there is a mismatch between the indexing of the Pell numbers and the denominators
of the convergents of $\sqrt 2$. The Pell numbers start at $P_0=0$ and the
denominators start at $q_0=1$.
Each number can be represented
as a sum $n=\sum_{i=1}^j d_iP_i$ with digits $d_i\in\{0,1,2\}$. 
The representation is unique under the condition that $d_i=0$ if $d_{i+1}=2$.
For example, $69=1\cdot 1+ 0\cdot 2+ 2\cdot 5 + 0\cdot 12 + 2\cdot 29$.
As in decimal notation, it is standard practice to write the digits
with the most significant digit first (msd). 
The decimal number $69$ is represented by $20201$ in Pell numeration.
A sequence is \emph{Pell automatic} if there exists a deterministic finite-state automaton (DFA)
that reads digits in Pell numeration and decides if a number is in the sequence.
The DFA in Fig.~\ref{fig:1} decides if a number is in \href{https://oeis.org/A120749}{A120749} or not. For example,
$69$ is entered as $20201$, leading to the state transitions
$0\to 2\to 4\to 2\to 4 \to 5$ ending in the accepting state $5$. The number is in the sequence.
In fact, inspection of the OEIS shows that it is the 34th element of the sequence, which implies
that the 69th step of the deterministic random walk has value 1. The next step, seventy,
is Pell number $P_6$, which has representation $100000$. It ends in state 3, which is accepting.
Therefore, the walk has value 0 at this point: $P_6$ is a zero of $S_n(2\sqrt 2)$. We will prove
below that the zeros and records of this walk are Pell automatic.
\begin{figure}[b]\label{fig:1}
    \centering
    \includegraphics[width=0.9\linewidth]{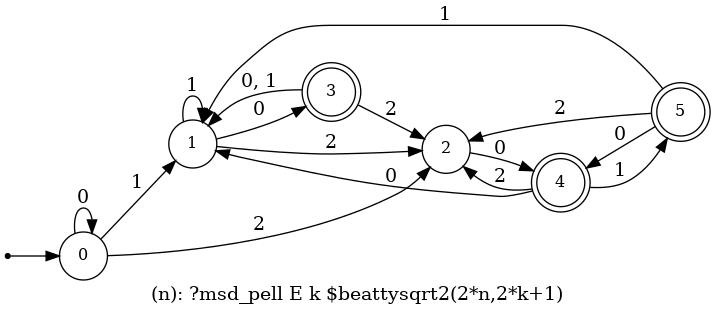}
    \caption{A finite state automaton that decides if a number $n$ is in A120749 or in its complement A120243. The input is in msd format. Inputs that end in double
    circled states (acceptance) are in A120749. Inputs that end in state 1 (rejection) are in
    A120243. Inputs that end in $2$ are not a valid Pell representation. The
    automaton is produced by the automatic theorem prover
    \texttt{Walnut} using the results in~\cite{schaeffer2024}.}
    \label{DFA120749}
\end{figure}

The DFA in Fig.~\ref{fig:1} is constructed from a method due
to Schaeffer et al.~\cite{schaeffer2024},
who showed how a Beatty sequence such as $\lfloor n\sqrt 2\rfloor$ can
be implemented in the automatic theorem prover~\texttt{Walnut}~\cite{mousavi2016}. 
In particular, it is possible to implement a command
\texttt{beattysqrt2(n,r)} which accepts numbers $r=\lfloor n\sqrt 2\rfloor$.
Sequence \href{https://oeis.org/A120749}{A120749} contains
the numbers $n$ such that $\lfloor 2n\sqrt 2\rfloor$ is even. In terms of first-order logic, a
number $n$ is in the sequence if
\[\exists k\ \lfloor 2n\sqrt 2\rfloor=2k+1.\]
In the \texttt{Walnut} environment this becomes
\begin{center}
    \texttt{"?msd\_pell E k \$beattysqrt2(2*n,2*k+1)":}
\end{center}
which produces the depicted DFA.

\section{The records of $S_n(2\sqrt 2)$}

We say that $r$ is a \emph{record} if $S_r(\xi)=m$ and none of the $S_n(\xi)$ for $n<r$ are equal to $m$.
The sequence $R_n$ of records of $S_n(\sqrt 2)$ starts as
\[
0,\ 1,\ 3,\ 8,\ 20,\ \ldots
\]
Jan van de Lune~\cite{lune1984} conjectured 
that the records of $S_n(\sqrt 2)$ 
satisfy a Pell-like recurrence, which was confirmed in~\cite{fokkink1994}.

\begin{theorem}[Van de Lune]\label{thm:lune}
The sequence of records $R_n$ of $S_n(\sqrt 2)$ satisfies the recurrence
\begin{equation}\label{eq:Lune}
R_{n+1}=2R_n+R_{n-1}+1,\ \text{with } R_0 = 0, R_1 = 1.
\end{equation}
and the values of consecutive records have alternating signs.
\end{theorem}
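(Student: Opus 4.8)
The plan is to analyze the walk $S_n(\sqrt 2)$ by tracking it between consecutive records and exploiting the self-similar structure coming from the Pell recurrence for the continued fraction convergents $q_n$ of $\sqrt 2$. The key observation is that the sign pattern of $(-1)^{\lfloor j\sqrt 2\rfloor}$ over a block of length $q_{2k}$ or $q_{2k+1}$ is governed by a renormalization: the ``three-distance'' structure of the orbit $\{j\sqrt 2\}$ means that after $q_n$ steps the orbit returns close to its start, and the increments $\lfloor (j+q_n)\sqrt 2\rfloor - \lfloor j\sqrt 2\rfloor$ are nearly constant, differing from $p_n$ by at most one in a controlled way. First I would set up this renormalization precisely: show that over the block $[1, q_{2k}]$ the walk $S_n$ stays in a bounded window and returns to a value that is the previous record, and that the extremal value attained in this block is the next record. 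This turns the claim into an induction: if $R_{n-1}$ and $R_n$ are consecutive records (with opposite signs, say $R_n$ being the first time the walk reaches a new positive extreme $m_n>0$ and $R_{n-1}$ the first time it reached $m_{n-1}<0$), then the next record $R_{n+1}$ is the first time it reaches $m_n + 1$ (or $m_{n-1}-1$), and this happens exactly $2R_n + R_{n-1} + 1$ steps in.

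The engine of the induction is a concatenation lemma describing how the walk on $[1,q_{n+1}]$ decomposes. Because $q_{n+1} = 2q_n + q_{n-1}$, the orbit on this range splits into two copies of the length-$q_n$ pattern and one copy of the length-$q_{n-1}$ pattern, each translated by an amount that shifts the ``phase'' $\{j\sqrt 2\}$ by a convergent-sized error. I would prove that on each such sub-block the walk reproduces the earlier walk up to a global sign $(-1)^{p_n}$ or $(-1)^{p_{n-1}}$ and a vertical offset equal to the value of the walk at the splice point. Since $p_n$ has a fixed parity pattern for $\sqrt 2$ (the relevant convergents give the sign alternation), concatenating two length-$q_n$ blocks and one length-$q_{n-1}$ block produces, at the level of the running maximum/minimum, exactly the arithmetic $R_{n+1} = 2R_n + R_{n-1} + 1$, with the $+1$ coming from the fact that the three blocks do not quite tile $[1,q_{n+1}]$ but overlap the boundary lattice points where an extra $(-1)^{\lfloor j\sqrt 2\rfloor} = +1$ step is picked up (this is where the discrepancy being one-sided, via Theorem~\ref{thm:BR}, enters). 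The alternating signs of consecutive records then fall out of the $(-1)^{p_n}$ factor flipping orientation each time we renormalize by one more level.

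The main obstacle I anticipate is the bookkeeping at the block boundaries: the naive decomposition of $[1,q_{n+1}]$ into sub-blocks of lengths $q_n, q_n, q_{n-1}$ is off by boundary terms, and getting the $+1$ exactly right — as opposed to $+0$ or $+2$ — requires a careful analysis of the orbit point $\{q_n\sqrt 2\}$ relative to the half-integers that flip the parity of $\lfloor j\sqrt 2\rfloor$. Equivalently, one must show that the walk makes its record-breaking step precisely at the splice, not one step early or late. I would handle this by proving a sharper invariant carried through the induction: not just the record values $R_n$ but the exact configuration of the walk in a bounded neighborhood of each record time (which values are attained, in what order, and the phase $\{q_n\sqrt 2\}$). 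With that strengthened inductive hypothesis the boundary contribution is forced, the recurrence $R_{n+1} = 2R_n + R_{n-1} + 1$ follows by direct computation, and the sign alternation is immediate. Finally I would check the base case $R_0 = 0$, $R_1 = 1$, $R_2 = 3$ by hand against the table of partial sums, which anchors the induction.
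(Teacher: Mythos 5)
Note first that the paper does not prove Theorem~\ref{thm:lune}: it is quoted as van de Lune's conjecture, confirmed in the reference \cite{fokkink1994}, and the closest in-paper analogue is the proof of Theorem~\ref{thm:rec2}, which runs exactly the induction you describe but for $S_n(2\sqrt 2)$, using the recursive Rules A--C. Your renormalization strategy is the right one in spirit and is essentially what those rules encode: Rule C is your translational sub-block and Rule B is a reflected one, with the splice values $S_{q'}$ supplying the vertical offsets and the parity of the numerators supplying the sign flips. So the approach is not wrong.

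The problem is that, as written, the decisive steps are announced rather than carried out, and this leaves genuine gaps. (i) The concatenation lemma itself is never proved; for $S_n(\sqrt 2)$ the correct statement involves a sign flip (one finds, e.g., $S_{q_n+k}=S_{q_n}-S_k$ rather than a pure vertical translate, since all numerators $p_n$ of the convergents of $\sqrt 2$ are odd), together with the exact values $S_{q_n}\in\{0,-1\}$; none of this is derived, and without it neither the offsets nor the sign alternation can be controlled. (ii) The arithmetic is not actually forced by your three-block picture: the mechanism is $R_{n+1}=R_n+q_n$ (the new record is the translate of the old one into the next block) combined with the identity $q_n=R_n+R_{n-1}+1$ for the Pell denominators, and your claim that concatenating two $q_n$-blocks and one $q_{n-1}$-block yields $2R_n+R_{n-1}+1$ ``at the level of the running maximum/minimum'' is exactly the point at issue --- you acknowledge this (``getting the $+1$ exactly right'') and defer it to a ``sharper invariant'' that is never formulated. (iii) You must also rule out that the reflected portion of $[q_n,q_{n+1}]$ produces a record earlier than the translated copy does; in the paper's analogous argument for Theorem~\ref{thm:rec2} this is where the sign of $S_{q'}$ and one-sidedness enter, and your sketch does not address it. Until the block lemma is stated and proved with its signs and splice values, and the location of the extremum within the new block is pinned down, the induction cannot be closed.
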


The records are sums of Pell numbers $R_n=\sum_{j=1}^n P_j$. In Pell numeration,
their representation is $11\cdots 1$ and the sequence $R_n$ is Pell automatic.
\begin{figure}[h]
    \centering
    \includegraphics[width=0.4\linewidth]{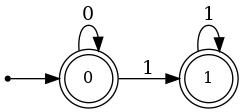}
    \caption{An automaton that accepts the records of $S_n(\sqrt 2)$ in
    Pell numeration in msd representation.}
    \label{fig:rec}
\end{figure}

V\'{a}clav 
Kot\v{e}\v{s}ovec separated the records into positive and negative
values. For a natural number $m$, let $A_m$ be the first index such 
that $S_n(\sqrt 2)=m$ and let $B_m$ be the first index such that
$S_n(\sqrt 2)=-m$, both starting out from $A_0=B_0=0$. Kot\v{e}\v{s}ovec 
noticed that these are recurrence
sequences and that $A_n$ occurs as \href{https://oeis.org/A001652}{A001652}
while $B_n$ occurs as \href{https://oeis.org/A001108}{A001108} in the OEIS.

\begin{corollary}
The sequences $A_n$ and $B_n$ satisfy the recurrences    
\[
A_{n+1} = 6A_{n} - A_{n-1} + 2,\ \text{with } A_0 = 0, A_1 = 3,
\]
and
\[
B_{n+1} = 6B_{n} - B_{n-1} + 2,\ \text{with } B_0 = 0, B_1 = 1.
\]
\end{corollary}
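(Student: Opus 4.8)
The plan is to express $A_n$ and $B_n$ as every‑other‑term subsequences of the record sequence $R_n$ and then to ``bisect'' the recurrence~\eqref{eq:Lune}.

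First I would pin down which record each of $A_m$ and $B_m$ is. Since $S_n(\sqrt2)$ changes by $\pm1$ at every step, the set of values attained up to time $n$ is always an integer interval containing $0$, and a record occurs precisely when this interval is enlarged, by exactly one, on one of its two sides. By \eqref{eq:Lune} the sequence $R_n$ is strictly increasing, so the records occur in index order $R_0<R_1<R_2<\cdots$; and since $S_1(\sqrt2)=(-1)^{\lfloor\sqrt2\rfloor}=-1$, the first record after $R_0=0$ has value $-1$. Combining this with the alternating‑sign claim of Theorem~\ref{thm:lune}, the record values are forced to be $0,-1,+1,-2,+2,-3,\dots$ in order, whence
\begin{equation*}
A_m=R_{2m}\quad(m\ge0),\qquad B_m=R_{2m-1}\quad(m\ge1),\qquad B_0=R_0=0 .
\end{equation*}
In particular $A_1=R_2=3$ and $B_1=R_1=1$, which are the asserted initial values.

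Next I would derive from \eqref{eq:Lune} a recurrence relating records two indices apart. Writing \eqref{eq:Lune} at $n$, $n-1$ and $n-2$ and using the first and third of the resulting identities to eliminate $R_n$ and $R_{n-2}$ from the middle one gives, after simplification,
\begin{equation}\label{eq:bisect}
R_{n+1}=6R_{n-1}-R_{n-3}+2\qquad(n\ge3).
\end{equation}
Equivalently, the homogeneous part of \eqref{eq:Lune} has characteristic roots $1\pm\sqrt2$, whose squares $3\pm2\sqrt2$ are the roots of $X^{2}-6X+1$; and since the constant $-\tfrac12$ solves \eqref{eq:Lune}, substituting it into $R_{n+1}-6R_{n-1}+R_{n-3}$ shows the constant term in \eqref{eq:bisect} is $-\tfrac12-6(-\tfrac12)+(-\tfrac12)=2$.

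Finally, setting $n=2k-1$ in \eqref{eq:bisect} yields $A_k=6A_{k-1}-A_{k-2}+2$ for $k\ge2$, which together with $A_0=0$, $A_1=3$ is the asserted recurrence for $A_n$; setting $n=2k-2$ yields $B_k=6B_{k-1}-B_{k-2}+2$ for $k\ge3$, and the one remaining case is the direct check $B_2=R_3=8=6\cdot1-0+2=6B_1-B_0+2$ (using $B_0=0$), so with $B_0=0$, $B_1=1$ the recurrence for $B_n$ follows as well. I expect the only real subtlety to be the bookkeeping in the second paragraph: verifying that the sign of $S_1(\sqrt2)$ together with the alternation of record signs genuinely forces $A_m=R_{2m}$ and $B_m=R_{2m-1}$. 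Once that is in place, the two recurrences drop out of a one‑line elimination.
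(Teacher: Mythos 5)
Your proposal is correct and follows essentially the same route as the paper: both derive the bisected recurrence $R_{n+2}=6R_n-R_{n-2}+2$ by eliminating the intermediate term from Van de Lune's recursion, and both use the fact that the first step is negative (so the record values are forced to be $0,-1,+1,-2,+2,\dots$) to identify $A_m$ and $B_m$ with the even- and odd-indexed records. Your write-up merely makes explicit the bookkeeping ($A_m=R_{2m}$, $B_m=R_{2m-1}$, and the base case $B_2$) that the paper leaves implicit.
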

\begin{proof}
This
follows from Van de Lune's recursion \eqref{eq:Lune} and the observation that the first
step is in the negative direction:
\[
\begin{aligned}
R_{n+2} &= 2\big(2R_n + R_{n-1} + 1\big) + R_n + 1 \\
       &= 5R_n + 2R_{n-1} + 3 \\
       &= 6R_n - R_{n-2} + 2.
\end{aligned}
\]
\end{proof}

The first few records of $S_n(2\sqrt 2)$ are 
\[
0,\ 1,\ 6,\ 35,\ 204,\ \ldots
\]
which happen to be the first few half Pell numbers $P_{2n}/2$. 
These numbers satisfy the recurrence relation $Q_{n+1}=6Q_n-Q_{n-1}$,
which is Kot\v{e}\v{s}ovec's recursion up
to a constant.
To verify that the records $Q_n$ are indeed half Pell numbers, we apply
the algorithm from \cite{fokkink1994} to compute values of the deterministic random
walk $S_n(\xi)$. 
It assumes that $0<\xi<1$ and
depends on the denominators of the convergents $q_n$
of $\xi/2$. In our case $\xi=2\sqrt 2$, we translate
to $2\sqrt 2-2$ to place it in the unit interval and
divide by two, to get $\sqrt 2-1$.
The denominators of its convergents are the Pell numbers
(starting from $q_0=1=P_1$). 
For this particular case we
have the following three rules. We write $q=q_{n+1}$, $q'=q_{n}$, 
and $q''=q_{n-1}$ and we write $S_n$ for $S_n(2\sqrt 2)$. The rules are,
in this special case of $\xi=2\sqrt 2$:

\begin{center}
    \fbox{%
        \begin{minipage}{0.75\textwidth}  
        \hfil Recursive rules for $S_n(2\sqrt 2)$\hfil
            \begin{enumerate}
                \item[] Rule A: $S_q = 1$ if $q$ is odd and $S_q = 0$ if $q$ is even.
                \item[] Rule B: $S_{q-k} = S_{q'} + S_{k-1}$ if $1 \leq k \leq q''$.
                \item[] Rule C: $S_{q'+k} = S_{q'} + S_k$ if $1 \leq k < q'$.
            \end{enumerate}
        \end{minipage}
    }
\end{center}
An explanation for these rules is that $q\xi$ is a close return to 0 mod 1. If $q$ is
even, there are equally many steps in the two semicircles $[0,1/2)$ and $[1/2,1)$.
If $q$ is odd, then it is the denominator of a convergent $p/q<\xi$. The return
$q\xi$ is in $[0,1/2)$ and therefore there is one more step in $[0,1/2)$.

Rule B determines $S_n$ for $n\in [q-q'',q)$ and Rule C determines $S_n$ for $n\in (q',2q')$.
Since $q-q''=2q'$ these rules, together with Rule A, determine all values $S_n$ by recursion.
The parity of the Pell numbers alternates. If $q$ is odd than $q'$ is even and vice versa.
If the interval $[q',q]$ is marked by an even $q'$ and an odd $q$, 
then the contributions $S_q'$ in rule B and rule C are zero.
It follows that there is no record in $[q',q]$ in this case. Records
can only occur if $q'$ is odd and $q$ is even. 
Notice that a Pell number $P_n$
is even if and only if its index is even.

\begin{theorem}\label{thm:rec2}
    The records $Q_n$ of $S_n(2\sqrt 2)$ are the half-Pell numbers $P_{2n}/2$.
\end{theorem}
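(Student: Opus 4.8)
The plan is to read off the self-similar structure of the walk $S_n=S_n(2\sqrt 2)$ from Rules A, B, C and to pin down where each successive record is set. I would fix notation first: the convergent denominators of $\sqrt 2-1$ are $q_k=P_{k+1}$, so the rules apply with $q=q_{k+1}=P_{k+2}$, $q'=q_k=P_{k+1}$, $q''=q_{k-1}=P_k$, and $q-q''=2q'$. Since a Pell number $P_j$ is even precisely when $j$ is even, for even $k$ one has $q'=P_{k+1}$ odd with $S_{q'}=1$, and for odd $k$ one has $q'=P_{k+1}$ even with $S_{q'}=0$, in which case Rules B and C merely paste shifted copies of already computed values. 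I would also record the identity $P_{2n+2}=2P_{2n+1}+P_{2n}$, that is, $P_{2n+2}/2=P_{2n+1}+P_{2n}/2$.

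The core would be an induction on $n\ge 1$ with hypothesis $\mathcal P(n)$: \emph{$S_m\le n$ for all $0\le m\le P_{2n+1}$, and $P_{2n}/2$ is the least index $m$ with $S_m=n$.} The base case $\mathcal P(1)$ is the direct evaluation $(S_0,\dots,S_5)=(0,1,0,1,0,1)$. For the inductive step $\mathcal P(n)\Rightarrow\mathcal P(n+1)$ I would apply the rules twice. First, taking $q=q_{2n+1}=P_{2n+2}$, so that $q'=P_{2n+1}$ is odd (hence $S_{q'}=1$) and $q''=P_{2n}$: Rule C gives $S_{P_{2n+1}+k}=1+S_k$ for $1\le k<P_{2n+1}$, Rule B gives $S_{P_{2n+2}-k}=1+S_{k-1}$ for $1\le k\le P_{2n}$, and $S_{P_{2n+2}}=0$ by Rule A. By $\mathcal P(n)$ every value produced is $\le n+1$; the value $n+1$ appears first under Rule C at the least $k$ with $S_k=n$, namely $k=P_{2n}/2$, giving the index $P_{2n+1}+P_{2n}/2=P_{2n+2}/2$, while any occurrence of $n+1$ under Rule B lies at an index $\ge 2P_{2n+1}$, which exceeds $P_{2n+2}/2$ because $P_{2n}/2<P_{2n+1}$. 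Thus $S_m\le n+1$ on $[0,P_{2n+2}]$ with first equality at $P_{2n+2}/2$. Second, taking $q=q_{2n+2}=P_{2n+3}$, so that $q'=P_{2n+2}$ is even (hence $S_{q'}=0$): Rules B and C only copy values $S_j$ with $j<P_{2n+2}$, all $\le n+1$, and $S_{P_{2n+3}}=1$, so any fresh occurrence of $n+1$ is at an index $>P_{2n+2}>P_{2n+2}/2$. This is $\mathcal P(n+1)$.

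To finish, I would invoke the non-negativity of $S_n(2\sqrt 2)$ established above (the Corollary that $b-a$ is positive); combined with $S_{P_{2n}/2}=n$ from $\mathcal P(n)$ (and $S_0=0$), this shows that the walk attains exactly the values $0,1,2,\dots$ and reaches $n$ for the first time at $P_{2n}/2$, since any index $m>P_{2n}$ with $S_m=n$ already exceeds $P_{2n}/2$. A record is, by the definition used here, an index at which the walk attains a value not attained earlier, so the records in increasing order are the first-attainment indices of $0,1,2,\dots$; that is, $Q_n=P_{2n}/2$.

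The main obstacle will be the bookkeeping inside the inductive step: verifying that the copy of the walk on $[0,P_{2n+1})$ inserted by Rule C attains its maximum $n$ for the first time exactly at $P_{2n}/2$, and that the pieces produced by Rule B never show $n+1$ before index $P_{2n+2}/2$. Both reduce to $\mathcal P(n)$ together with the elementary inequality $P_{2n}/2<P_{2n+1}$, so with the hypothesis phrased to reach up to $P_{2n+1}$ — exactly what Rule C needs as input — the remaining verifications are routine.
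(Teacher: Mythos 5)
Your proof is correct and takes essentially the same route as the paper: an induction through the intervals $[P_{2n+1},P_{2n+2}]$ using Rules A, B, C, showing the unique new record arises from Rule C applied to the previous record, at index $P_{2n+1}+P_{2n}/2=P_{2n+2}/2$. Your explicit invariant (the bound $S_m\le n$ up to $P_{2n+1}$ together with first attainment at $P_{2n}/2$) and the check that Rule B only produces the value $n+1$ at indices $\ge 2P_{2n+1}>P_{2n+2}/2$ merely make precise what the paper's shorter argument leaves implicit.
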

Note that the half-Pell number $P_{2n}/2$ is the sum of the first
$n$ odd Pell numbers. By rules A and C, each odd Pell number contributes
1 to the walk at step $P_{2n}/2$.
\begin{proof}
We can limit our attention to intervals $[q',q]$ such that $q'$ is odd.
These are the intervals $[P_{2n-1},P_{2n}]$.
For $n=1$ we
have $P_2=2$ and the unique record in $[1,2]$ occurs indeed at $1=P_2/2$.
By rules B and C the value of $S_n$ increases by $S_{q'}=1$ for $n\in (q',q)$ compared to earlier values,
so there is at most one record in this interval. By rule C $S_{q'+k}(\sqrt 2)=1+S_k(\sqrt 2)$
for all $k\in [0,q')$ and therefore the record has to come from this rule.
The new record in $[q',q]$ is $q'+r$
for the old record $r\in [0,q')$. By induction, records occur as 
sums of odd Pell numbers $P_1+P_3+\cdots+P_{2m-1}=P_{2m}/2$.
\end{proof}

\begin{figure}[h]
    \centering
    \includegraphics[width=0.6\linewidth]{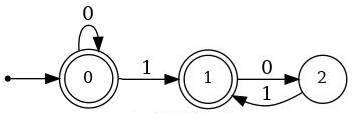}
    \caption{An automaton that accepts the records of $S_n(2\sqrt 2)$ in
    Pell numeration in msd representation.}
    \label{fig:rec2}
\end{figure}
If we ignore the first record at 0, then
in Pell numeration the records of $S_n(2\sqrt 2)$ occur at $1,\ 101,\ 10101,\ \ldots$ and more generally $(10)^*1$, where the Kleene $*$ represents an arbitrary repetition. Again, they form a Pell automatic sequence.
The Ostrowski numeration system can be defined for every real number $\xi$, see \cite[ch 3.5]{shallit2022}.
It is equal to Pell numeration if $\xi=\sqrt 2$. 
\begin{definition} \quad Given a positive real irrational number $\xi = [a_0, a_1, \dots]$  
with continued fraction convergents $p_n/q_n = [a_0, a_1, \dots, a_n]$, we can write every integer $N \geq 0$ uniquely as  
\[
N = \sum_{0 \leq i \leq j} b_i q_i
\]
where the digits $(b_i)_{i \geq 0}$ satisfy the conditions

\begin{itemize}
    \item[(a)] $0 \leq b_0 < a_1$.
    \item[(b)] $0 \leq b_i \leq a_{i+1}, \quad \text{for } i \geq 1$.
    \item[(c)] \textit{For } $i \geq 1$, \textit{if } $b_i = a_{i+1}$, \textit{then } $b_{i-1} = 0$.
\end{itemize}
\end{definition}

A sequence is $\xi$-Ostrowski automatic if there exists a finite state automaton that decides
if a number is in the sequence.
The following conjecture is a variation of
the conjecture of van de Lune and Arias de Reyna, which we mentioned earlier.

\begin{conjecture}\label{conj:1}
    The records of $S_n(2\xi)$ are $\xi$-Ostrowski automatic for quadratic irrational $\xi$.
\end{conjecture}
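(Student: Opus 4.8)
The plan is to follow the route that succeeds for $\xi=\sqrt 2$ in Theorem~\ref{thm:rec2} and Figure~\ref{fig:rec2}: to express ``$n$ is a record'' as a first-order property over a structure whose definable sets are exactly the $\xi$-Ostrowski automatic ones. A general quadratic $\xi$ will not produce records in closed form like $Q_n=P_{2n}/2$, so the explicit formula has to be replaced by a \emph{synchronized comparison} of partial sums. One starts from the elementary identity
\[
S_n(2\xi)=2f(n)-n,\qquad f(n):=\#\{\,1\le j\le n\colon\{j\xi\}\in[0,1/2)\,\},
\]
which follows from $(-1)^{\lfloor 2j\xi\rfloor}=+1\iff\{j\xi\}\in[0,1/2)$; equivalently $S_n(2\xi)=2D_n$, where $D_n$ is the discrepancy of the orbit $\{j\xi\}_{j\le n}$ relative to $[0,1/2)$, and $n$ is a record of $S(2\xi)$ exactly when $2f(n)-n\ne 2f(m)-m$ for every $m<n$.

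The heart of the argument is to show that the relation
\[
S_n(2\xi)=S_m(2\xi)\ \Longleftrightarrow\ 2f(n)+m=2f(m)+n
\]
is recognised by a finite automaton that reads $n$ and $m$ in parallel in $\xi$-Ostrowski representation. For $\xi=\sqrt 2$ this is precisely what \texttt{Walnut} provides via \texttt{beattysqrt2}: the counting function $f$ is Pell-synchronized because $\lfloor n\sqrt 2\rfloor$ is, and the displayed relation is then obtained from $f$ by addition. For a general quadratic $\xi=[a_0;\overline{a_1,\dots,a_p}]$ the continued fraction is eventually periodic, the associated Ostrowski numeration is a linear numeration system with addition recognised by an automaton, and one expects $\lfloor n\xi\rfloor$ --- hence $f$, hence the comparison relation --- to be $\xi$-Ostrowski synchronized by the same circle of ideas as in \cite{schaeffer2024,shallit2022}. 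Granting this, ``$n$ is a record'' becomes the first-order formula $\forall m\,(m<n\to S_m(2\xi)\ne S_n(2\xi))$, which defines a $\xi$-Ostrowski automatic set; the alternation of signs of consecutive records follows in the same way after also defining the predicate $S_n(2\xi)<S_m(2\xi)$.

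To make this effective --- and to actually draw the automaton, as in Figures~\ref{fig:rec} and~\ref{fig:rec2} --- one would run the renormalization of \cite{fokkink1994} in parallel: translate $2\xi$ into $(0,1)$, renormalize as in the $\sqrt 2$ case, and generalise Rules~A, B and~C to arbitrary partial quotients. This yields, at each level $k$, a family of rules indexed by the current digit; the sign of $S_{q_k}(2\xi)$ takes over the role played above by the parity of the Pell number $P_k$; and the record inside a block $[q_k,q_{k+1}]$ is the earlier record shifted up by $q_k$ exactly when that block is \emph{admissible}. Since $\xi$ is quadratic, the sequence $(S_{q_k}(2\xi))_{k\ge 0}$ is bounded and eventually periodic, so only finitely many admissibility states occur, and the set of record indices is the language of strings that periodically place a bounded digit on top of a zero --- a regular language. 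A final base-change lemma would identify the renormalization numeration (the one built from the denominators $q_k$ used in \cite{fokkink1994}) with $\xi$-Ostrowski numeration up to an automatic conversion; for $\sqrt 2$ these two already agree after reindexing.

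The main obstacle is the synchronization input: proving rigorously, for \emph{every} quadratic $\xi$, that $\lfloor n\xi\rfloor$ --- equivalently the counting function $f$ --- is recognised in $\xi$-Ostrowski numeration, and that the renormalization numeration converts to it automatically. For $\sqrt 2$ both facts are available off the shelf; in general they should follow from the decidability of the structure $\langle\N,+,V_\xi\rangle$ for quadratic $\xi$ together with an Ostrowski-digit formula for the discrepancy of $[0,1/2)$, but assembling this uniformly --- and in particular controlling the interaction between the Ostrowski carry condition~(c) and the periodic block structure of the records --- is where the genuine difficulty lies, and is the reason the statement is posed as a conjecture rather than proved as a theorem.
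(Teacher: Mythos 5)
The statement you are addressing is posed in the paper as a \emph{conjecture}, and the paper does not prove it in general: it only establishes it for BR-numbers, via the explicit digit characterization of Theorem~\ref{thm:recordBR} (records are exactly the $N$ with odd Ostrowski digits zero and even digits $a_{i+1}/2$ up to the leading one) obtained from the renormalization rules A--C, followed by a direct construction of an lsd automaton. Your proposal takes a genuinely different route --- first-order definability of the record predicate over the $\xi$-Ostrowski-synchronized structure, in the spirit of \texttt{Walnut} and \cite{schaeffer2024} --- and if it could be completed it would be strictly stronger, covering arbitrary quadratic $\xi$ rather than just BR-numbers. But as you yourself acknowledge, it is not a proof, and the gap is not a routine one.

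Concretely, the step that fails as written is the claim that the counting function $f(n)=\#\{j\le n:\{j\xi\}\in[0,1/2)\}$ is $\xi$-Ostrowski synchronized because $\lfloor n\xi\rfloor$ is. Synchronization of a Beatty sequence does not transfer to the counting function of the automatic set $\{j:\lfloor 2j\xi\rfloor \text{ even}\}$: summatory functions of Ostrowski-automatic sets are not synchronized in general. The quantity you actually need to control is $2f(n)-n=S_n(2\xi)$, i.e.\ the discrepancy of the orbit relative to $[0,1/2)$, and by Kesten's theorem \eqref{discrep} this is unbounded for every quadratic $\xi$ (two-sidedly so when $\xi$ is not a BR-number), so there is no bounded-carry automaton computing it digit by digit; one would need a genuine Ostrowski--S\'os digit formula for the discrepancy of $[0,1/2)$ and a proof that it defines a synchronized relation. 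That is precisely the content that the paper supplies, by hand, only in the BR case (where one-sidedness makes the renormalization rules B and C monotone), and it is the reason the general statement remains a conjecture. Your final paragraph correctly identifies this as the obstruction; what you have is a plausible reduction of the conjecture to the synchronization of $S_n(2\xi)$, not a proof of it.
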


\begin{figure}[h]
    \centering
    \includegraphics[width=0.9\linewidth]{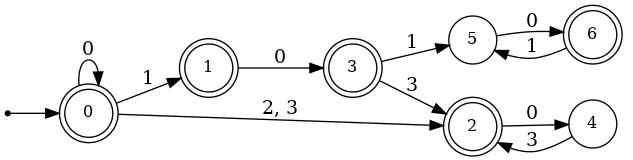}
    \caption{An automaton that accepts the records of $S_n(\sqrt 3)$ in
    $\sqrt 3/2$-Ostrowski numeration, provided that the system of recurrences
    conjectured by Van de Lune and Arias de Reyna holds. The continued
fraction of $\sqrt{3}/2$ has partial coefficients $[0;1,\overline{6,2}]$, where the bar marks that
these coefficients repeat. The denominators of its convergents are $1, 7, 15, 97, \ldots$.}
    \label{fig:recsqrt3}
\end{figure}
To back up their conjecture,
Van de Lune and Arias de Reyna provide a specific recurrence relation for the records of $\xi=\sqrt 3$,
with initial values $t_{j}=0$ for negative indices (where we corrected a typo for $t_{4n+1}$).
\[
\begin{aligned}
    t_{4n+1} &=2 t_{4n} + t_{4n-1} + 1 \\
    t_{4n+2} &= t_{4n+1} + 2 t_{4n} + 1 \\
    t_{4n+3} &= t_{4n+2} + 2 t_{4n} + 1\\  
    t_{4n+4} &= 2 t_{4n+3} + t_{4n} + 1. 
\end{aligned}
\]
This recurrence was found experimentally. It generates the sequence
\[
1,\ 2,\  3,\  7,\  18,\   33,\   48,\   104,\   257,\   466,\ 675,\ 1455,\ 3586, \ \ldots 
\]
which unfortunately does not yet match any sequence in the OEIS. It is possible to construct an automaton,
shown in Fig.~\ref{fig:recsqrt3},
for this system of recurrences using the automatic theorem prover \texttt{Walnut}. 

\section{The zeros of $S_n(2\sqrt 2)$}

An index $n$ is a zero of a deterministic random walk if $S_n(\xi)=0$.
The study of the asymptotic number of zeros of deterministic random walks with offsets
was initiated in~\cite{aaronson1982visitors} and remains a topic of
ongoing research.
For our walk $S_n(2\sqrt 2)$ the first few zeros are
\[
0,\ 2,\ 4,\ 12,\ 14,\ 16,\ 24,\ 26,\ 28,\ 70,\ 72, \ 74,\ 82,\ 84,\ 86,\ \ldots
\]
which is entry \href{https://oeis.org/A194368}{A194368} of the OEIS. In Pell numeration
these numbers are
\[
\epsilon ,\ 10,\ 20,\ 1000,\ 1010,\ 1020,\ 2000,\ 2010,\ 2020,\ 100000,\ \ldots
\]
where $\epsilon$ is the empty word. 
Like its records, the zeros of $S_n(\sqrt 2)$ are easy to spot in Pell numeration. 
\begin{figure}
    \centering
    \includegraphics[width=0.61\linewidth]{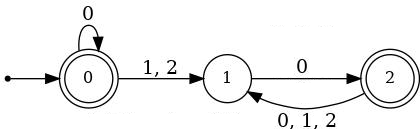}
    \caption{An automaton that accepts the zeros of $S_n(2\sqrt 2)$ in
    Pell numeration in msd representation.}
    \label{fig:zeros}
\end{figure}
\begin{theorem}
    The zeros of $S_n(2\sqrt 2)$ occur at $n=0$ or $(10|20)(00|10|20)^*$ in Pell numeration.
\end{theorem}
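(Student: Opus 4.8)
The plan is to run the recursion given by Rules~A, B and C, grouping the integers into the intervals $[P_j,P_{j+1}]$ and using that $P_j$ is odd exactly when $j$ is odd, so that each such interval has one even and one odd endpoint. Concretely, I will prove by induction on $m\ge 0$ that the set of zeros of $S_n(2\sqrt2)$ lying below $P_{2m+1}$ equals
\[
Z_m:=\Big\{\textstyle\sum_{i=1}^m c_i P_{2i}\ :\ c_i\in\{0,1,2\}\Big\},
\]
recording at the outset the Pell identity $2\sum_{i=1}^m P_{2i}=P_{2m+1}-1$, so that $\max Z_m=P_{2m+1}-1$ and the $Z_m$ form an increasing chain whose union consists of all the zeros.

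First I would dispose of the intervals $[P_{2m+1},P_{2m+2}]$ with odd left endpoint. There Rule~A gives $S_{P_{2m+1}}=1$, while Rules~C and B give $S_{P_{2m+1}+k}=1+S_k$ on $[P_{2m+1},2P_{2m+1})$ and $S_{P_{2m+2}-k}=1+S_{k-1}$ on $[2P_{2m+1},P_{2m+2})$. Since $S_n(2\sqrt2)\ge 0$ (which follows from Theorem~\ref{thm:BR}), every value inside such an interval is at least $1$, the only zero being the right endpoint $P_{2m+2}$. Hence no zero lies strictly between two consecutive even-indexed Pell numbers, and all remaining zeros occur inside the intervals $[P_{2m+2},P_{2m+3}]$ with even left endpoint.

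For the inductive step I assume the zeros below $P_{2m+1}$ are $Z_m$. On $[P_{2m+2},2P_{2m+2})$ Rule~C gives $S_{P_{2m+2}+k}=S_k$ (as $P_{2m+2}$ is even), so the zeros there are $P_{2m+2}+Z_m$; on $[2P_{2m+2},P_{2m+3})$ Rule~B gives $S_{P_{2m+3}-k}=S_{k-1}$, so the zeros there are $P_{2m+3}-1-Z_m=2P_{2m+2}+(P_{2m+1}-1-Z_m)$. The key observation is that $P_{2m+1}-1-\sum c_i P_{2i}=\sum (2-c_i)P_{2i}$, so $z\mapsto P_{2m+1}-1-z$ is a bijection of $Z_m$; hence the zeros on $[2P_{2m+2},P_{2m+3})$ are $2P_{2m+2}+Z_m$. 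Together with the zeros below $P_{2m+2}$ — which, by the previous paragraph, coincide with the zeros below $P_{2m+1}$, namely $Z_m$ — this yields exactly $\{c\,P_{2m+2}+z : c\in\{0,1,2\},\ z\in Z_m\}=Z_{m+1}$, closing the induction; the base case $m=0$ is just $S_0=0$, $S_1=1$.

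It remains to translate into Pell numeration: the integer $\sum_{i=1}^m c_i P_{2i}$ has msd digit string $c_m\,0\,c_{m-1}\,0\cdots c_1\,0$, a concatenation of $m$ blocks from $\{00,10,20\}$, and deleting leading zero-blocks from a nonzero such number leaves precisely a word of $(10|20)(00|10|20)^*$; conversely every such word arises this way, and $0$ is the empty word, which recovers the stated description and matches the automaton in Figure~\ref{fig:zeros}. I expect the one genuinely delicate point to be the reflection introduced by Rule~B; it is tamed exactly by the identity $P_{2m+1}-1=2\sum_{i\le m}P_{2i}$, which realizes that reflection as the involution $c_i\mapsto 2-c_i$ on the even digit positions and hence preserves the class of zeros.
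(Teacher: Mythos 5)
Your proof is correct and follows essentially the same route as the paper: induction over the Pell intervals $[P_j,P_{j+1}]$, using non-negativity to exclude zeros when the left endpoint is odd, Rule~C to produce the leading block $10$, and the reflection from Rule~B together with the identity $P_{2m+1}-1=2\sum_{i\le m}P_{2i}$ to produce the leading block $20$. Your explicit description of the zeros as the set $Z_m$ with the involution $c_i\mapsto 2-c_i$ is a slightly more careful packaging of the paper's regular-expression argument (and your indexing of the key Pell identity is the correct one).
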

\begin{proof}
    There cannot be a zero in $[q',q]$ if $q'$ is an odd Pell number because rules
    $B$ and $C$ add $S_{q'}(\sqrt 2)=1$ to previous values, and these are non-negative.  
    Zeros occur in $[q',q]$ for even Pell numbers $q'$, starting at $q'$
    following rule A.
    Note that rules B and C partition this interval
    into numbers with initial digits 1 and 2 in Pell numeration.

    We assume by induction that the earlier
    zeros all are of the required form.     
    If rule C applies, then $S_{q'+k}(\sqrt 2)=0$
    if and only if $S_k(\sqrt 2)=0$, so indeed all zeros in this part
    of $[q',q]$ are of the form $(10)(00|10|20)^*$.
    For rule B we use the identity
    \[
    P_{2m}-1=\sum_{i=1}^{m-1} 2P_{2i}.
    \]    
    We rewrite rule B as $S_{(q-1)-k)}(\sqrt 2)=S_{k}(\sqrt 2)$, using that $q'$
    is an even Pell number. In particular, $q-1-k$ is a zero if and only if $k$
    is a zero. Now $q-1$ is equal to $(20)^m$ in Pell numeration and $k=(00|10|20)^{m-1}$
    by our inductive assumption. It follows that $(q-1)-k=(20)(00|10|20)^{m-1}$
    and we are done.
    \end{proof}

Surprisingly, the set of zeros of $S_n(\sqrt 2)$ does not seem to be Pell automatic,
or if it is, its automaton needs many states.
The one-sidedness of $S_n(2\sqrt 2)$ seems to be essential.
We say that $\xi$ is a \emph{BR-number} (after Boshernitzan and Ralston) if its odd convergents $q_{2n+1}$
are even, or, equivalently, if all its odd partial quotients $a_{2n+1}$ are even.
According to Theorem~\ref{thm:BR}, the walk $S_n(2\xi)$ is nonnegativeif and
only if $\xi$ is a BR-number. Note that $q_0=1$ and $q_1$ is even for a BR-number.
By the recursion $q_{n+1}=a_nq_n+q_{n-1}$, the parity of the convergents
alternates for BR-numbers.

We write $q=q_{n+1}$, $q'=q_{n}$, 
and $q''=q_{n-1}$ so that $q=aq'+q''$ for the partial quotient $a=a_{n+1}$.
The rules B and C from~\cite{fokkink1994} allow some overlap, 
but we state them in such a form that they apply to separate parts of $[q',q)$. 
\begin{center}
    \fbox{%
        \begin{minipage}{0.75\textwidth}  
        \hfil Recursive rules for $S_n(2\xi)$ for a BR-number $\xi\in (0,1)$ \hfil
            \begin{enumerate}
                \item[] Rule A: $S_q = 1$ if $q$ is odd and $S_q = 0$ if $q$ is even.
                \item[] Rule B: $S_{q-k} = S_{q'} + S_{k-1}$ if $1 \leq k \leq q/2$.
                \item[] Rule C: $S_{q'+k} = S_{q'} + S_k$ if $1 \leq k < q/2-q'$.
            \end{enumerate}
        \end{minipage}
    }
\end{center}

\begin{theorem}\label{thm:zeroBR}
    Let $\xi\in (0,1)$ be a BR-number and let $N=\sum_{i=0}^n b_iq_i$ be the $\xi$-Ostrowski
    representation. 
    Then $N$ is a zero of $S_n(2\xi)$ if and only if $b_i=0$ for all even~$i$. 
\end{theorem}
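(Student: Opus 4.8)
The plan is to prove the equivalence by strong induction on $N$, unwinding $S_n(2\xi)$ through Rules A, B and C while tracking Ostrowski digits. The base case $N=0$ is immediate ($S_0=0$ and the digit string is empty). For $N\ge1$, first record the parity pattern of a BR-number: since $q_0=1$ and $q_{n+1}=a_{n+1}q_n+q_{n-1}$ with every $a_{2m+1}$ even, the denominator $q_i$ is odd for even $i$ and even for odd $i$, and $a_{n+1}\ge2$ whenever $n$ is even. Let $n$ be the index of the leading digit of $N$, so $q_n\le N<q_{n+1}$ and $b_n\ge1$; the argument then branches on the parity of $n$.

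If $n$ is even, then $q_n$ is odd, so $S_{q_n}=1$ by Rule A. Since $a_{n+1}\ge2$ we have $q_{n+1}>2q_n$, and a short check shows Rules B and C tile $(q_n,q_{n+1})$, each writing $S_N$ as $S_{q_n}$ plus a value $S_m$ with $m<N$; the latter is nonnegative by Theorem~\ref{thm:BR}, so $S_N\ge1\ne0$. As $b_n\ne0$ lies at the even index $n$, the digit condition also fails, and the equivalence holds (both sides false).

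If $n$ is odd, then $q_n$ is even, so $S_{q_n}=0$. The case $N=q_n$ is immediate: $S_N=0$ and $b_n$ is the only nonzero digit, sitting at an odd index. For $q_n<N<q_{n+1}$, one of Rules B, C applies: if $N<q_{n+1}/2$, Rule C gives $S_N=S_{q_n}+S_{N-q_n}$, and if $N\ge q_{n+1}/2$, Rule B gives $S_N=S_{q_n}+S_{q_{n+1}-1-N}$; since $S_{q_n}=0$ this reads $S_N=S_{\sigma(N)}$ with $\sigma(N)\in\{N-q_n,\;q_{n+1}-1-N\}$ and $\sigma(N)<N$ in either case. By the inductive hypothesis it then suffices to show that the even digits of $N$ all vanish if and only if those of $\sigma(N)$ do. For Rule C this is clear: $N-q_n$ has the representation of $N$ with $b_n$ decreased by one and all other digits unchanged, and $n$ is odd. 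For Rule B I would first prove, by iterating $q_{n+1}-1=a_{n+1}q_n+(q_{n-1}-1)$ down the even indices to $q_0-1=0$, that (since $n+1$ is even) the Ostrowski representation of $q_{n+1}-1$ is $\sum_{i\le n,\ i\text{ odd}}a_{i+1}q_i$, supported on odd indices only. Hence, if $N$ is supported on odd indices, so is $q_{n+1}-1-N=\sum_{i\le n,\ i\text{ odd}}(a_{i+1}-b_i)q_i$: each digit lies in $[0,a_{i+1}]$, and conditions (a)--(c) hold automatically because the even digits vanish. The reverse implication follows since $N\mapsto q_{n+1}-1-N$ is an involution of $\{0,\dots,q_{n+1}-1\}$. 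This closes the induction.

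The routine parts are the parity bookkeeping and checking that Rules A, B, C tile $[q_n,q_{n+1})$ — with a little floor/ceiling care when $q_{n+1}$ is odd, i.e.\ when $n$ is odd. The one substantive step is the Rule B ``reflection'': pinning down the representation of $q_{n+1}-1$ and verifying that complementing its odd-indexed digits yields a legal Ostrowski string. With that identity established the proof closes at once, and it specializes to the $\xi=\sqrt2$ case treated above, where $a_{i+1}\equiv2$ so that $q_{n+1}-1$ reads $(20)^{(n+1)/2}$ in Pell numeration, in accordance with the pattern $(10\mid20)(00\mid10\mid20)^{\ast}$ found there.
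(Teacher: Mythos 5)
Your proposal is correct and follows essentially the same route as the paper's proof: exclude the intervals $[q_n,q_{n+1})$ with $n$ even via one-sidedness, then induct through Rules B and C, using for the Rule B reflection exactly the paper's identity $q_{2j+2}-1=\sum_{i\le j} a_{2i+2}q_{2i+1}$ to see that complementing the odd-position digits preserves the support condition. Your write-up is somewhat more explicit than the paper's (both directions of the equivalence, validity of the complemented Ostrowski string, and the case of a leading digit $b_n\ge 2$ handled by repeated decrement), but there is no substantive difference in method.
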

\begin{proof}
    By rules B and C, for each $n\in [q',q)$ there is some $k<n$ such that
    $S_n=S_{q'}+S_k$. If $q'$ is odd, then $S_{q'}=1$ and $S_k\geq 0$
    since the walk is one-sided. There are no zeros in $[q',q)$ 
    if $q'$ is odd. If $N$ is a zero, it must
    be in $[q',q)$ for an even $q'$.
    
    We need to prove that the odd digits of $N$ are zero in Ostrowski numeration.
    First suppose that $N<q/2$. Rule C applies and $S_N=S_{q'}+S_k=S_k$ for $N=q'+k$. Therefore, $k$ is a zero and
    by induction has odd digits zero. Note that $k<q/2-q'<q'$ and that $q'$ gives a digit $1$
    at an even position in Ostrowski numeration. The odd digits of $N$ are zero if $N<q/2$.
    If $N\geq q/2$ then $S_N=S_{q'}+S_{k-1}=S_{k-1}$ for $N=q-k$.
    By induction, $N$ is a zero if and only if $N=q-k$ for a zero $k-1<N$. Now
    we use the identity
    \begin{equation}\label{eq:qsom}
        q_{2j+2}=\sum_{i=0}^j a_{2i+2}q_{2i+1}+1    
    \end{equation}
    to write
    \[
    N=\sum_{i=0}^j a_{2i+2}q_{2i+1}-(k-1)
    \]
    for a zero $k-1$. By induction $k-1=\sum_{i=0}^j b_{2i+1}q_{2i+1}$ for digits $b_{2i+1}\leq a_{2i+2}$.
    We conclude that all even digits in the expansion of $N$ are zero.
\end{proof}

It is easy to construct an automaton that decides if all digits on even positions are zero.

\begin{corollary}
    The zeros of $S_n(2\xi)$ form a $\xi$-automatic sequence if $\xi$ is a BR-number.
\end{corollary}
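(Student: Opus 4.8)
The plan is to derive this directly from Theorem~\ref{thm:zeroBR}. That theorem identifies the zeros of $S_n(2\xi)$ with exactly those integers $N$ whose $\xi$-Ostrowski representation $N=\sum_i b_iq_i$ has $b_i=0$ for every even index $i$. So the task reduces to exhibiting a finite-state automaton that, reading the Ostrowski digits of an integer, accepts precisely the words meeting this condition; by the definition of $\xi$-Ostrowski automatic used in the excerpt, producing such an automaton is exactly what is required.

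First I would recall that for a BR-number (hence a quadratic irrational) the continued fraction $[a_0,a_1,a_2,\dots]$ is eventually periodic, so the digit constraints (a)--(c) in the definition of Ostrowski numeration --- the bounds $b_0<a_1$, $b_i\le a_{i+1}$, and the forbidden pattern ``$b_i=a_{i+1}\Rightarrow b_{i-1}=0$'' --- depend only on the residue of $i$ modulo the eventual period of the continued fraction, once $i$ is past the pre-period. A DFA can therefore be built whose state records the residue of the current position modulo the period together with the previous digit, checking at each step that the current digit respects its bound and that the forbidden pattern does not occur. This is the classical validity automaton for Ostrowski numeration (see \cite[Ch.~3.5]{shallit2022}), and the valid representations form a regular language. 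The extra condition ``$b_i=0$ for all even $i$'' is likewise regular: a two-state automaton that tracks the parity of the position currently being read, accepting only if every digit it sees at an even position is $0$. Intersecting this with the validity automaton yields a single DFA recognizing exactly the Ostrowski representations of zeros of $S_n(2\xi)$.

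The one point that needs care is the bookkeeping around which positions count as even. In the definition the index $i$ is attached to $q_i$ and counted from the least significant end, whereas the figures in this paper (like the Pell-numeration automata) read most-significant-digit first, where the parity of a given digit's position depends on the unknown word length. This is only a cosmetic obstacle: regular languages are closed under reversal (equivalently, one may guess the parity of the leading position with a nondeterministic automaton and then determinize), and the period of the continued fraction is fixed, so the parity counter can always be aligned with the indexing of Theorem~\ref{thm:zeroBR}. Once that alignment is fixed, everything else is routine finite-automaton construction; in practice the resulting automaton can be produced mechanically in \texttt{Walnut}, exactly as the automata in the preceding sections were, giving that the zeros of $S_n(2\xi)$ are a $\xi$-Ostrowski automatic sequence whenever $\xi$ is a BR-number.
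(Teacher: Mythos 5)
Your proposal is correct and follows essentially the same route as the paper, which simply observes after Theorem~\ref{thm:zeroBR} that it is easy to build an automaton checking that all digits in even positions vanish; you merely spell out the standard details (eventual periodicity of the continued fraction for the validity automaton, parity tracking, and closure under reversal for the msd/lsd issue) that the paper leaves implicit.
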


The following generalizes Theorem~\ref{thm:rec2} from $\xi=\sqrt 2$ to BR-numbers. 

\begin{theorem}\label{thm:recordBR}
    Let $\xi\in (0,1)$ be a BR-number and let $N=\sum_{i=0}^n b_iq_i$ be the $\xi$-Ostrowski
    representation. 
    Then $N$ is a record of $S_n(2\xi)$ if and only if $n$ is even, $b_i=0$ for all odd~$i$,
    $b_i={a_{i+1}}/2$ for all even $i<n$, and $b_n\leq a_{n+1}/2$.
\end{theorem}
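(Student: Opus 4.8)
The plan is to mirror the structure of the proof of Theorem~\ref{thm:zeroBR}, replacing "zero" with "record" throughout and tracking carefully how a record propagates through the recursive rules B and C. First I would set up the induction on the interval $[q',q)$ where $q=q_{n+1}$, $q'=q_n$. By rules B and C, every $S_N$ with $N\in[q',q)$ equals $S_{q'}+S_k$ for some $k<N$, so a genuinely new value (a record) can only appear when $S_{q'}$ itself contributes a fresh increment; since the walk is one-sided and $S_{q'}=1$ precisely when $q'$ is odd (and $0$ when $q'$ is even), records live only in intervals $[q',q)$ with $q'$ odd, i.e.\ with $n$ even. This forces $n$ even and, via the parity alternation of the convergents for a BR-number, pins down which $b_i$ positions are "active."

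The core step is to show that within such an interval the record occurs exactly via rule~C applied to an old record, and that rule~B contributes nothing new. On the rule-C part, $N=q'+k$ with $1\le k<q/2-q'$, and $S_N=1+S_k$; since $S_{q'}=1$ shifts the whole sub-walk up by $1$, $N$ is a record iff $k$ is a record for the walk restricted to $[0,q')$, which by the inductive hypothesis means $k$ has the prescribed digit pattern on indices $<n$. Appending the digit $b_n=a_{n+1}/2$ (the value that makes $q'$ itself act as a "full" even-position digit, so that the increments from the earlier odd Pell-type denominators accumulate the way they did for $\sqrt2$ in Theorem~\ref{thm:rec2}) is what bumps the index up by $2$; allowing $b_n\le a_{n+1}/2$ covers the partially-filled final block, exactly as the $(10)^*1$ pattern did in the $\sqrt2$ case. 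On the rule-B part, $N=q-k$ with $S_N=1+S_{k-1}$: here I would use the identity~\eqref{eq:qsom} to rewrite $N$ in Ostrowski digits and check that the resulting digit string has a nonzero odd digit (coming from the $q-1$ expansion, which is $\sum a_{2i+2}q_{2i+1}$ shifted), hence cannot be of the claimed form — and moreover that any value attained in the rule-B part was already attained earlier, so no record is lost by excluding it. The quantitative bookkeeping that each odd-indexed convergent $q_{2i+1}$ contributes exactly $+1$ to the running record value (so that the record at index $2m$ is $\sum_{i<m}(\text{that }+1)$, generalizing $P_{2n}/2$) falls out of iterating rule~C, and it also supplies the base case $N=q'$ itself when the final digit block is empty.

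The main obstacle I expect is the rule-B side: unlike the zeros case, where "$N$ is a zero iff $k-1$ is a zero" made rule~B transparent, for records one must argue not just that rule~B produces no string of the claimed form but also that it produces no \emph{new} value at all — otherwise a record could hide there and the characterization would be incomplete. This requires comparing the rule-B values $1+S_{k-1}$ on $[q/2,q)$ against the rule-C values $1+S_k$ on $[q',q/2)$ and against the values already attained on $[0,q')$, i.e.\ showing the record set of the whole walk up to $q$ equals the record set up to $q'$ together with the single new record from rule~C. I would handle this by the symmetry $S_{q-k}=S_{q'}+S_{k-1}$ together with the fact (provable by the same one-sidedness argument, or cited from \cite{fokkink1994}) that the walk on $[q-q/2,q)$ retraces values already seen, so its maximum does not exceed the maximum on $[q',q/2)$; then the unique new record on the whole interval is the one located by rule~C, and induction closes.
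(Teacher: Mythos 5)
Your overall strategy coincides with the paper's: induct over the blocks $[q',q)=[q_n,q_{n+1})$ cut out by the convergents, use the parity alternation of $S_{q_n}$ for a BR-number to force records into blocks with $n$ even, and then track how rules B and C propagate records. The gap is in the core accounting of where the new records actually arise, and it occurs at exactly the step you flag as the main obstacle. First, the asserted equivalence ``$N=q'+k$ is a record iff $k$ is a record of the walk restricted to $[0,q')$'' is false in the ``if'' direction: an earlier record $k$ gives $S_{q'+k}=1+S_k$, a value the walk has already attained; only the \emph{last} previous record (the first index attaining the running maximum) can produce a new value. For $\xi=\sqrt2-1$, $k=1$ is a record but $q_4+1=30$ is not a record of $S_n(2\sqrt 2)$ (its value $2$ was already attained at $6$); your criterion would certify Pell strings such as $10001$, which the theorem excludes, so the digit bookkeeping built on this equivalence overcounts badly.

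Second, and more seriously, the claim that rule B contributes no new values --- and the supporting ``fact'' that the maximum of the walk on $[q/2,q)$ does not exceed its maximum on $[q',q/2)$ --- is false. On $[q/2,q)$ rule B gives the values $1+S_{k-1}$ with $k-1$ running over \emph{all} of $[0,q/2)$, in particular over $[q',q/2)$, where the walk has already been lifted by $S_{q'}=1$; hence the maximum there is $1+\max_{j<q/2}S_j$, strictly above the maximum on $[q',q/2)$. The largest record of every block, $N=q_{n+1}/2$ (digits $b_i=a_{i+1}/2$ at even $i\le n$, zeros elsewhere --- so it also refutes your claim that every rule-B index has a nonzero odd digit), lies in the rule-B range and is precisely this first attainment of the new maximum, via $S_{q/2}=1+S_{q/2-1}$ with $S_{q/2-1}=\max_{j<q/2}S_j$. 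Worse, when $a_{n+1}=2$ (the $\sqrt2$ case) the rule-C range $1\le k<q/2-q'=q''/2$ stops just short of the last previous record $q''/2$, so rule C produces \emph{no} records at all and every record beyond the base case comes from rule B (e.g.\ $q=12$, $q'=5$: the record $6=q/2$); carried out as written, your induction would find no records there and could not recover the case $b_n=a_{n+1}/2$ in general. The correct bookkeeping, which is what the paper's proof does in its separate treatment of the subinterval $[q/2,q)$, is that the new records of the block are $bq'+r$ with $r$ the final previous record and $b=1,\dots,a_{n+1}/2$: those with $b<a_{n+1}/2$ come from rule C applied to $r$ and its successive shifts, and the top one, $q/2$, is the unique rule-B record, identified by locating the last index before $q/2$ at which the running maximum is attained. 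Your proposal needs this analysis in place of the retracing lemma, which cannot be proved because it is false.
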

\begin{proof}
We argue by induction on the index $n$ of the most significant digit.
    Records can only occur in $[q',q)$ if $q'$ is odd, so $n$ is even.
    The first partial quotient $a_1$ is even and the initial record of the walk 
    occurs immediately at the initial steps $a_1/2$, which shows that
    the statement is true for $n=0$.
    Assume that it is true for $n-2$. The final record with most significant
    digit $n-2$ is equal to $(q_{n-1})/2$ by Equation~\eqref{eq:qsom}.
    We write $q_{n+1},q_n,q_{n-1}$ as $q,q',q''$ and $q=aq'+q''$. The final record before $q''$
    is $r=(q''-1)/2$.
    Rule C implies that the records in $[q',q/2)$ are $bq'+r$ up to $q/2$.
    Since $(a/2)q'+r=(q-1)/2$ the digit $b$ runs up to $a/2$.
    Rule B implies that the only possible record in the subinterval $[q/2,q)$
    is $q-k$ if $k-1$ is the final record before $q/2$.
    This record is $(q-1)/2$.
    Since $q$ is odd, the minimal number in $[q/2,q)$ is $(q+1)/2$. If we write it
    as $q-k$, then we get $k=(q-1)/2$ and $k-1$ is below the final record.
    All records are of the prescribed form.
\end{proof}

\begin{corollary}
    Both Conjecture~\ref{conj:1} and the conjecture of Van de Lune and Arias de Reyna hold
    if $\xi$ is a BR-number.
\end{corollary}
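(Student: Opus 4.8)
The plan is to deduce both statements from Theorem~\ref{thm:recordBR}, which already pins down every record of $S_n(2\xi)$ by its $\xi$-Ostrowski digits: the digits vanish at every odd position, equal $a_{i+1}/2$ at every even position below the leading one, and the leading digit $b_n$ (at an even position $n$) ranges over $\{1,\dots,a_{n+1}/2\}$. The only additional input needed is that a quadratic irrational has an eventually periodic continued fraction $[a_0;a_1,a_2,\dots]$, with a pre-period of some length and minimal period $p$; since $\xi$ is a BR-number, $a_i$ is even for every odd $i$, so all the numbers $a_{i+1}/2$ above are integers and the map $i\mapsto a_{i+1}/2$ is eventually periodic.

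For Conjecture~\ref{conj:1} I would verify that the set of $\xi$-Ostrowski representations of records is a regular language. Reading a representation from its least significant digit, a finite automaton can carry the current position $i$ modulo $2p$ together with a flag saying whether $i$ has left the pre-period, check after each digit that it equals the value prescribed by Theorem~\ref{thm:recordBR} --- namely $0$ at an odd position and $a_{i+1}/2$ at an even position that is not the last digit read --- and accept precisely when the last digit read lies at an even position $i$ and belongs to $\{1,\dots,a_{i+1}/2\}$; the empty word, the representation of the record $0$, is accepted as well. Regular languages are closed under reversal, so reversing this automaton yields the \texttt{msd}-first recogniser required by the definition, and the records of $S_n(2\xi)$ are $\xi$-Ostrowski automatic.

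For the conjecture of Van de Lune and Arias de Reyna I would make the block structure explicit. List the records in increasing order $0=R_0<R_1<R_2<\cdots$, so $S_{R_j}(2\xi)=j$. By Theorem~\ref{thm:recordBR} the nonzero records fall into consecutive blocks indexed by $m\ge 0$, the $m$-th block consisting of the records whose leading digit sits at position $2m$; evaluating the Ostrowski expansion and using the telescoping identity $\sum_{0\le i<m}a_{2i+1}q_{2i}=q_{2m-1}$ (with $q_{-1}=0$), which is just the continued-fraction recursion $a_{2i+1}q_{2i}=q_{2i+1}-q_{2i-1}$ summed, shows that block $m$ is exactly $\{\,b\,q_{2m}+q_{2m-1}/2 : 1\le b\le a_{2m+1}/2\,\}$, hence has $a_{2m+1}/2$ elements. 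Fix a common period $P$ of the eventually periodic sequences $m\mapsto a_{2m+1}/2$ and $m\mapsto$ (the matrix sending $(q_{2m},q_{2m-1})^{\top}$ to $(q_{2m+2P},q_{2m+2P-1})^{\top}$) --- for instance $P=p$ works --- and let $L$ be the number of records lying in $P$ consecutive blocks, which past the pre-period is a constant; then increasing $m$ by $P$ increases the record index $j$ by $L$ while leaving the within-block parameter $b$ unchanged. Consequently, once $j$ is beyond the pre-period, $R_{j+kL}=(b,\tfrac12)\cdot v_{m+kP}$ with $v_m=(q_{2m},q_{2m-1})^{\top}$ and $v_{m+P}=Mv_m$ for a fixed matrix $M$ of determinant $1$, so the Cayley--Hamilton identity $M^2=(\operatorname{tr}M)M-I$ gives, for every residue of $j$ modulo $L$,
\[
R_{j+(k+2)L}=(\operatorname{tr}M)\,R_{j+(k+1)L}-R_{j+kL},\qquad k\ge 0.
\]
Together with the finitely many records occurring before the periodic regime, taken as initial data, this is the required finite system of linear recurrences for the records of $S_n(2\xi)$; note it is even homogeneous, which accounts for the absence of a constant term in the half-Pell recursion $Q_{n+1}=6Q_n-Q_{n-1}$.

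The main obstacle is the bookkeeping in the third paragraph: correctly pairing the record index $j$ with the block index $m$ and a residue class, verifying the telescoping identity and the resulting description of block $m$, and absorbing the pre-period of the continued fraction into the initial conditions so that the recurrences hold uniformly. One should also check that the matrix $M$ --- and hence the recurrence coefficient $\operatorname{tr}M$ --- is the same for all residues, so that the system has the uniform form conjectured by Van de Lune and Arias de Reyna rather than something more intricate.
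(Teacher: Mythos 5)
Your proposal is correct and follows essentially the same route as the paper: an lsd automaton reading the digit conditions of Theorem~\ref{thm:recordBR} against the eventually periodic partial quotients, and the eventual periodicity of the block structure (differences $q_{2m}$, each occurring $a_{2m+1}/2$ times) to get the recurrences. Your treatment of the second half is more explicit than the paper's sketch --- the telescoping identity giving block $m$ as $\{bq_{2m}+q_{2m-1}/2\}$ and the Cayley--Hamilton argument yielding the homogeneous recurrence $R_{j+(k+2)L}=(\operatorname{tr}M)R_{j+(k+1)L}-R_{j+kL}$ are correct and usefully pin down the exact form of the recurrence system, which the paper only asserts to exist.
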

\begin{proof}
    We construct an lsd automaton. It checks that $b_i=0$ if $i$ is odd and
    that $b_i\leq a_{i+1}/2$ if $i$ is even. 
    If this inequality is strict,
    then all further digits need to be zero. Therefore, the odd transitions
    are $0$. The even transitions are $a_{i+1}/2$, unless the transition
    is $<a_{i+1}/2$, which moves to a separate state for which all further
    transitions are $0$.
    Since the partial quotients are
    eventually periodic, the automaton eventually loops back and the number
    of states is finite. 
    For a BR number,
    the period is even, so this does not conflict with the check that odd digits
    are zero. 
    This settles Conjecture~\ref{conj:1} for BR-numbers.
    
    To find a system of linear recurrences,
    observe that the difference between consecutive records equals $q_j$ for some even index $j=2k$.
    Each $q_j$ occurs $a_{j+1}/2$ times and therefore the number of occurrences is eventually
    periodic. 
    The convergents $q_{2k}$ form a recurrence sequence and therefore the difference sequence
    of consecutive records satisfies a system of recurrences, and so does the sequence of records. 
\end{proof}

\section{The difference sequence $b-a$}

We establish Kimberling's second observation
that $b-a$ assumes all positive integers infinitely often. 
We focus on the walk \( S_n(2\sqrt{2}) \) and 
for simplicity, we abbreviate our notation to \( S_n = S_n(2\sqrt{2}) \). The walk exhibits two key symmetries, described by Rules B and C.
Rule B is reflexive, meaning that the step from \( S_{q-k} \) to \( S_{q-k+1} \) mirrors the step from \( S_{k-1} \) to \( S_{k-2} \). Rule C, on the other hand, is translational: it states that from the \( q \)-th step onward, the next \( q \) steps replicate the initial \( q \) steps. 
To fully address Kimberling’s second observation, we need an additional symmetry, ensuring that each value \( k > 0 \) appears infinitely often in \( b - a \).

\begin{lemma}\label{lem:ruleD} 
Let $q=q_{2n-1}$ be an even denominator. Then
\[
S_{q/2+k}=S_{q/2}-S_k
\]
for $0\leq k\leq q/2$.
\end{lemma}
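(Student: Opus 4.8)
The plan is to establish this third symmetry (call it Rule D) by the same kind of return-time analysis that underlies Rules A, B, and C, but now exploiting the fact that when $q=q_{2n-1}$ is an even denominator, the point $q\xi$ (with $\xi=2\sqrt 2$, or equivalently the half-angle $\sqrt 2-1$ in the unit interval) is a very close two-sided return to $0 \bmod 1$, and the half-way point $q/2$ corresponds under the rotation to a near-return to $\frac12$. The key observation is that $\lfloor j\xi\rfloor$ and $\lfloor (q-j)\xi\rfloor$ have opposite parities for $1\le j< q$ when $q$ is even: indeed $\lfloor j\xi\rfloor+\lfloor(q-j)\xi\rfloor = \lfloor q\xi\rfloor - 1 = q\xi$-integer-part minus one, and since $q$ is an even denominator $q\xi$ is just below an even integer, so this sum is odd. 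Hence $(-1)^{\lfloor(q/2+k)\xi\rfloor}$ should relate to $(-1)^{\lfloor(q/2-k)\xi\rfloor}$ with a sign flip, which is exactly what produces the minus sign in $S_{q/2+k}=S_{q/2}-S_k$.

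Concretely, I would first write $S_{q/2+k}-S_{q/2}=\sum_{j=q/2+1}^{q/2+k}(-1)^{\lfloor j\xi\rfloor}$ and reindex $j=q/2+i$ for $1\le i\le k$, giving $\sum_{i=1}^k (-1)^{\lfloor (q/2+i)\xi\rfloor}$. The goal is to show each term equals $-(-1)^{\lfloor i\xi\rfloor}$, i.e. that $\lfloor(q/2+i)\xi\rfloor$ and $\lfloor i\xi\rfloor$ differ in parity. Writing $\xi = p/q + \delta$ where $p/q$ is the relevant convergent and $\delta$ is the (small, signed) error with $|q\delta|<1$, we have $(q/2+i)\xi = p/2 + ip/q + (q/2+i)\delta$ and $i\xi = ip/q + i\delta$, so the difference is $p/2 + (q/2)\delta$ plus a genuinely tiny correction $\le k\delta$ which is negligible for $k\le q/2$. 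Since $q=q_{2n-1}$ is even and $p=p_{2n-1}$ is necessarily odd (consecutive convergents being coprime, an even $q$ forces an odd $p$), $p/2$ is a half-integer, and $(q/2)\delta$ has absolute value $<1/2$; one checks that adding $p/2$ shifts the floor by an odd amount, and that the $\le k\delta$ correction never pushes things across an integer boundary as long as $0\le k\le q/2$. This yields the claimed parity flip termwise and hence the identity.

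The cleaner route, which I would actually use in the writeup, is to derive Rule D from Rules B and C rather than from scratch. Apply Rule B with the even denominator $q$: for $1\le m\le q/2$ we have $S_{q-m}=S_{q'}+S_{m-1}$, and since $q'=q_{2n-2}$ is odd we can combine this with Rule A. Setting $q-m = q/2 + k$, i.e. $m = q/2 - k$, and separately expressing $S_{q/2}$ via Rule B (with $m=q/2$, so $S_{q/2}=S_{q'}+S_{q/2-1}$), subtraction gives $S_{q/2+k}-S_{q/2} = S_{q/2-k-1}-S_{q/2-1}$; one then telescopes the right-hand side and re-applies Rule B (or the reflexive symmetry directly) to recognize it as $-S_k$. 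The small bookkeeping nuisance is the off-by-one in Rule B (the index $k-1$ versus $k$) and the boundary case $k=q/2$, where one must check the endpoint value $S_q$ against Rule A; both are routine once the indices are lined up.

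The main obstacle I anticipate is purely the index arithmetic: making sure the range $0\le k\le q/2$ is exactly the range on which Rule B is valid (it requires $1\le m\le q/2$, which matches), handling the boundary $k=0$ (trivial) and $k=q/2$ (where $S_{q/2+k}=S_q$ must equal $S_{q/2}-S_{q/2}=0$, consistent with Rule A since $q$ is even), and correctly tracking the single $-1$ that comes from $\lfloor q\xi\rfloor$ being odd. There is no analytic difficulty; the content is entirely in the parity of the convergent denominators, which is already guaranteed for BR-numbers (and in particular $\sqrt 2$) by the alternation noted just before the rules box.
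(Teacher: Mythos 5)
Your second route (deriving the identity from Rules B and C), which you say you would actually write up, has a genuine gap at exactly the point where the real work lies. After the two applications of Rule B you arrive at $S_{q/2+k}-S_{q/2}=S_{q/2-k-1}-S_{q/2-1}$, and you then assert that one telescopes and re-applies Rule B to recognize the right-hand side as $-S_k$. But the needed identity $S_{q/2-1}-S_{q/2-1-k}=S_k$ is not routine: it is a reflection of the walk inside $[0,q/2)$ about its own midpoint, i.e.\ essentially the lemma itself one scale down. Indeed, writing $q=2q'+q''$ so that $q/2=q'+q''/2$, Rule C reduces it (for $k<q''/2$) to the same statement with $q$ replaced by the previous even denominator $q''$, and for larger $k$ one must first decompose $k$ in Pell numeration (strip the leading digit blocks $10$, so $k=q_{2n-2}+\cdots+q_{2n-2j}+k'$ and $S_k=j+S_{k'}$ by Rule C) before that reduction applies. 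This is precisely what the paper's proof does: an induction over the even denominators, the digit decomposition of $k$, Rule B applied at the smaller scale $r=q_{2n-2j-1}$, and the record values ($q/2$ is the $n$-th record, $r/2$ the $(n-j)$-th) to reassemble $S_{q/2}-S_k$. Without that induction and decomposition, your route 2 is circular; the ``index bookkeeping'' you defer is the entire proof.

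Your first route, the termwise parity flip, is the paper's stated intuition and can in fact be made rigorous, but not as you wrote it. The shift in the exponent when $j\mapsto j+q/2$ is $(q/2)\cdot 2\sqrt2=q\sqrt2=p+\epsilon$ with $p=p_{2n-1}$ odd and $\epsilon$ a small negative error of size $\|q\sqrt2\|$; it is an odd integer plus a tiny error, not a half-integer $p/2$, and the difference of the two exponents is exactly $(q/2)\cdot 2\sqrt2$ (there is no additional correction of size $k\delta$). What must be checked is that $\lfloor 2i\sqrt2+\epsilon\rfloor=\lfloor 2i\sqrt2\rfloor$ for all $1\le i\le q/2$, and this is where you only say ``one checks''; the fact that makes it true is the best-approximation property of convergents, $\|m\sqrt2\|\ge\|q_{2n-1}\sqrt2\|$ for $1\le m\le q_{2n-1}$ (with $m=q$ handled separately since $\{q\sqrt2\}=1-|\epsilon|$), which you never invoke. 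Your side claim that $\lfloor j\xi\rfloor+\lfloor(q-j)\xi\rfloor$ is always odd is also false as stated (it is $\lfloor q\xi\rfloor$ or $\lfloor q\xi\rfloor-1$ depending on $j$). So as written neither route is complete; if you supply the best-approximation inequality, route 1 becomes a short direct proof, genuinely different from (and arguably simpler than) the paper's inductive argument through Pell numeration and the rules.
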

\begin{proof}
    This follows from the rules of $S_n(2\sqrt2)$. The intuition behind this equation is that
    after $q/2$ steps the parity of the rounded exponents $2(j+q/2)\sqrt 2$
    is opposite to that of $2j\sqrt 2$ since $q\sqrt 2\approx p$ where $p/q$ is the
    convergent and $p$ is odd.
    
    The half-Pell number $q/2$ is a sum of
    odd convergents $q_{2n-1}/2=q_{2n-2}+q_{2n-4}+\cdots+q_0$. 
    We present $k$ in Pell numeration by a word of length $2n-1$,
    padding with initial zeros if necessary. 
    Since $k\leq q/2$ its Pell presentation is $(10)^j0w$ for some $j\geq 0$
    and a word $w$ of length $2(n-j-1)$. In particular
    \[
    k=q_{2n-2}+\ldots+q_{2n-2j}+k'
    \]
    for $k'<q_{2n-2j-2}$. 
    Rule C implies that $S_k=j+S_{k'}$.   
    If we write $r=q_{2n-2j-1}$ then  $q/2-k=r/2-k'$.
    By rule B and by induction
    \[
    S_{q/2+k}=1+S_{q/2-k-1}=1+S_{r/2-k'-1}=S_{r/2+k'}=S_{r/2}-S_{k'}.
    \]
    Now $q/2$ is the $n$-th record and $r/2$ is the $(n-j)$-th record.
    Therefore $S_{q/2}=j+S_{r/2}$ and we find
    \[
    S_{r/2}-S_{k'}=S_{q/2}-S_k
    \]
    as required.
\end{proof}

\begin{lemma}\label{lem:1}
    Let $q=q_{2n-1}$ be an even denominator. 
    \begin{equation}~\label{eq:ab}
    a(q/2+j)=q+a(j)\text{ and }b(q/2+j)=q+b(j)    
    \end{equation}
     for $j\leq q/2$
\end{lemma}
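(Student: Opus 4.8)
The plan is to translate the claim into a statement about the walk $S_n=S_n(2\sqrt2)$ and read it off from Rules A--C. Write $P(m)=\tfrac12\bigl(m+S_m\bigr)$ for the number of positive steps among $1,\dots,m$; then $m-P(m)$ counts the negative steps, $a(n)$ is the unique index $m$ that is a positive step with $P(m)=n$, and $b(n)$ is the unique index $m$ that is a negative step with $m-P(m)=n$. Since $P$ is nondecreasing and increases by $1$ precisely at positive steps (and $m\mapsto m-P(m)$ at negative steps), everything reduces to proving the block identity
\begin{equation}\label{eq:shift}
S_{q+m}=S_m\qquad\text{for all }0\le m\le q .
\end{equation}
Indeed, \eqref{eq:shift} gives $P(q+m)=q/2+P(m)$ on $[0,q]$ — using $S_q=0$, which holds by Rule~A since $q=q_{2n-1}$ is even, so $P(q)=q/2$ — and, by differencing \eqref{eq:shift} at $m$ and $m-1$, it shows the step at $q+m$ has the same sign as the step at $m$ for $1\le m\le q$. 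Hence for $1\le j\le q/2$ with $a(j)\le q$ the index $q+a(j)$ is a positive step at which $P$ equals $q/2+j$, which forces $a(q/2+j)=q+a(j)$; the argument for $b$ is identical. It remains to verify the range condition $a(j),b(j)\le q$ and to prove \eqref{eq:shift}.

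The range condition is a counting remark. From $S_q=0$, exactly $q/2$ of the first $q$ steps are positive and $q/2$ are negative, and the step at index $q$ itself is \emph{negative}: since $p_{2n-1}/q_{2n-1}>\sqrt2$, the number $2q\sqrt2$ lies below the even integer $2p_{2n-1}$ by less than $2/q_{2n}\le 2/5$, so $\lfloor 2q\sqrt2\rfloor$ is odd. Therefore $a(j)\le q-1$ for all $j\le q/2$, while $b(j)\le q-1$ for $j\le q/2-1$ and $b(q/2)=q$. For \eqref{eq:shift}, the range $1\le m\le q-1$ is exactly Rule~C applied on $[q_{2n-1},q_{2n}]$, i.e.\ with $q'=q_{2n-1}=q$: there $S_{q+m}=S_q+S_m=S_m$. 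The endpoint $m=0$ is the convention $S_0=0=S_q$, so the only remaining endpoint is $m=q$, which asks for $S_{2q}=0$.

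So the one genuine computation is $S_{2q}=0$, and this is where the real obstacle lies — not in any single identity but in correctly locating which end of the mirrored block $[q+1,2q]$ carries the extra step. Since $q_{2n-1}\sqrt2$ lies below $p_{2n-1}$ by less than $1/q_{2n}\le 1/5<1/4$, we get $\lfloor 4q_{2n-1}\sqrt2\rfloor=4p_{2n-1}-1$, odd, so $2q$ is a negative step; together with $S_{2q-1}=S_{q+(q-1)}=S_q+S_{q-1}=S_{q-1}=1$ (Rule~C, and $S_{q-1}=S_q+1$ because the step at $q$ is negative) this gives $S_{2q}=S_{2q-1}-1=0$. (Alternatively $2q_{2n-1}=q_{2n}-q_{2n-2}$, so Rule~B on $[q_{2n-1},q_{2n}]$ with $k=q_{2n-2}$ yields $S_{2q}=S_{q_{2n-1}}+S_{q_{2n-2}-1}=0$, using that $q_{2n-2}$ is an odd denominator whose step is positive, forcing $S_{q_{2n-2}-1}=0$.) With \eqref{eq:shift} established on all of $[0,q]$, the first paragraph delivers $a(q/2+j)=q+a(j)$ and $b(q/2+j)=q+b(j)$ for $1\le j\le q/2$; in particular the case $j=q/2$ reads $b(q)=q+b(q/2)=2q$, consistent with $2q$ being the $(q/2)$-th negative step after index $q$. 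Precisely the parity of $q_{2n-1}$, through Rule~A, is what makes the bookkeeping come out.
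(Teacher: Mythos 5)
Your proof is correct and follows essentially the same route as the paper's: Rule A gives $S_q=0$, so the first $q$ steps split evenly, and Rule C makes the walk (hence the pattern of forward/backward steps) repeat after index $q$, shifting the counting functions by $q/2$. The only real difference is that you also verify the endpoint $S_{2q}=0$ (needed for the case $j=q/2$ of the $b$-identity, where $q+b(q/2)=2q$ lies just beyond Rule C's range), a detail the paper's terser proof leaves implicit.
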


\begin{proof}
    Since $q$ is even, it is a zero, and there are equally many forward and backward steps to $q$. By definition,
    $a(q/2+j)$ is the index of the $(q/2+j)$-th forward step. There are $q/2$ forward steps up to
    index $q$, after which the walk repeats itself by rule C for the next $q/2$ steps. Therefore, $a(q/2+j)=q+a(j)$. The argument for $b(q/2+j)$ is the same.
\end{proof}

\begin{lemma}
    For any $k>0$, if $b(n)-a(n)=k$ for some $n$, then there are infinitely many $m$
    such that $b(m)-a(m)=k$.
\end{lemma}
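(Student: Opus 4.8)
The plan is to read this off directly from Lemma~\ref{lem:1}, which already packages the relevant self-similarity of the walk. First I would record the one-line consequence of \eqref{eq:ab}: if $q=q_{2n-1}$ is an even denominator and $j\le q/2$, then subtracting the two identities in \eqref{eq:ab} gives
\[
b(q/2+j)-a(q/2+j)=\big(q+b(j)\big)-\big(q+a(j)\big)=b(j)-a(j).
\]
In other words, the difference sequence $b-a$ reproduces each of its initial values, unchanged, along every scaled translate $j\mapsto q/2+j$.

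Next, suppose $b(n_0)-a(n_0)=k$ for some $n_0$ (necessarily $n_0\ge 1$ since $k>0$). For each index $N$ such that the denominator $q_{2N-1}$ is even and $q_{2N-1}/2\ge n_0$, I apply the displayed identity with $j=n_0$ to obtain
\[
b\big(q_{2N-1}/2+n_0\big)-a\big(q_{2N-1}/2+n_0\big)=k .
\]
It then remains to note that there are infinitely many admissible $N$: the denominators of the convergents of $\sqrt 2$ are the Pell numbers, which are even precisely at odd indices and grow without bound, so $q_{2N-1}\to\infty$. Hence for all large $N$ the constraint $q_{2N-1}/2\ge n_0$ holds, and the indices $m=q_{2N-1}/2+n_0$ are pairwise distinct because $N\mapsto q_{2N-1}/2+n_0$ is strictly increasing. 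This yields infinitely many $m$ with $b(m)-a(m)=k$.

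I do not anticipate a real obstacle; the entire content is carried by Lemma~\ref{lem:1}, and the only thing needing a remark is the (immediate) fact that the even denominators $q_{2N-1}$ form an infinite unbounded family, which follows from the growth and parity pattern of the Pell numbers. Together with the earlier corollary that $b-a$ is positive and the unboundedness of $b-a$ coming from Kesten's theorem, this lemma is the ingredient that turns "each value of $b-a$ that occurs, occurs" into "each value that occurs, occurs infinitely often," which is the dynamical half of Kimberling's second question; the complementary statement that every positive integer does occur is handled separately.
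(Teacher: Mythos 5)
Your proof is correct and is exactly the paper's argument: the paper's one-line proof also derives the repetition of each difference $b(n)-a(n)$ from Equation~\eqref{eq:ab} (Lemma~\ref{lem:1}) for all sufficiently large even denominators $q$. You have simply spelled out the subtraction step and the fact that the even denominators $q_{2N-1}$ are unbounded, which the paper leaves implicit.
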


\begin{proof}
    It follows from Equation~\eqref{eq:ab} that each difference $b(n)-a(n)$ repeats after $q/2$ steps
    for a sufficiently large $q$.
\end{proof}
If $q=q_{2m-1}$ is the $m$-th even denominator, then $q/2$ is the $m$-th record, and
there is a surplus of $m$ forward steps among the first $q/2$ steps.
\begin{lemma}\label{lem:a(n)}
    For all $m>1$, we have \[
    b\left(\frac {q_{2m-1}+2m}{4}\right)-a\left(\frac {q_{2m-1}+2m}{4}\right)=a(m).
    \]
\end{lemma}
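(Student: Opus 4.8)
The plan is to evaluate both sides by exploiting the two symmetries already established: Lemma~\ref{lem:ruleD} (the reflection $S_{q/2+k}=S_{q/2}-S_k$) together with Rule~C (translation). Write $q=q_{2m-1}$, so $q$ is even, $q/2$ is the $m$-th record, and $S_{q/2}=m$. Set $N=\frac{q+2m}{4}=\frac{q/2+m}{2}$. The key observation is that $N$ is the midpoint of the interval $[0,q/2]$ shifted by $m/2$; more precisely, with $k_0=\frac{q/2-m}{2}$ we have $N=q/2-k_0 - (\text{small correction})$ — I will need to track parities carefully here. The idea is that Lemma~\ref{lem:ruleD} forces $S_N$ and $S_{q/2-N}=S_{k_0}$ to be related by $S_N + S_{k_0}=m$, and simultaneously $N$ and $k_0$ are ``balanced'' around $q/4$ in a way that makes $b(N)-a(N)$ computable from the record structure near $q/4$.

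First I would nail down the exact arithmetic: since $b-a$ at an index $n$ equals $S_n$ evaluated at... no — rather, $b(n)-a(n)=k$ is equivalent to $a(n)-2n<0\leq b(n)-2n$ with the precise relation that the discrepancy $S_n(2\sqrt2)$ counts the surplus of forward steps, so $b(n)-a(n)$ and $S_n$ are linked through the positions $a(n),b(n)$ rather than being equal. Concretely, if among the first $\ell$ integers there are $f$ forward and $\ell-f$ backward steps with $f-(\ell-f)=S_\ell$, then after $n=\min(f,\ell-f)$ completed pairs... I would use Lemma~\ref{lem:1}'s style of bookkeeping: $a(j)+b(j)$ counts total steps and $b(j)-a(j)$ measures how far the walk has drifted. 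So the first real step is to express $b(n)-a(n)$ in terms of the walk: one shows $b(n)-a(n)=\min\{\ell : S_\ell \text{ returns appropriately}\}$, or more cleanly, that $b(n)-a(n)=2S_{a(n)}$-type identity — I expect $b(n)-a(n)$ equals the number of backward steps strictly between the $n$-th forward step and the $(n{+}1)$-st, summed appropriately, which by the zero/record analysis reduces to $S$ at a controlled index.

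Next I would apply the three reduction rules at the top scale: $N$ lands in the interval $[q/2, q)$ (or just below $q/2$), so Rule~B or Rule~C applies, peeling off the leading Pell/Ostrowski digit and reducing $N$ to an index of size $O(q_{2m-3})$ with $S$-value reduced by $1$. Iterating, after $m$ steps of this reduction I expect to land exactly at the index $m$ itself (this is where the ``$+2m$'' in the numerator is engineered to work), so that $b(N)-a(N)$ collapses to $b(\text{something})-a(\text{something})$ at scale $m$, and the answer $a(m)$ emerges because the $m$ peeled-off record contributions shift the index by exactly the Pell-numeration representation of $a(m)$ — recall from Theorem~\ref{thm:rec2} and the remark after it that $a$, the records, and the half-Pell numbers all have transparent Pell representations.

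The main obstacle will be the parity/floor bookkeeping in the division by $4$: the quantity $\frac{q_{2m-1}+2m}{4}$ is only an integer for suitable $m$ (one must check $q_{2m-1}\equiv -2m \equiv 2m \pmod 4$, which should follow from the Pell recurrence $P_{n+1}=2P_n+P_{n-1}$ by an easy induction modulo $4$, giving the hypothesis $m>1$ its role), and tracking whether each application of Rule~B versus Rule~C is the correct one requires knowing on which side of the relevant half-denominator the reduced index sits at every stage. I would isolate this as a separate arithmetic lemma — ``the $\xi$-Ostrowski (here Pell) representation of $\frac{q_{2m-1}+2m}{4}$ is (record-prefix of length related to $m$) concatenated with (representation of $a(m)$)'' — prove it by induction using Equation~\eqref{eq:qsom} and Lemma~\ref{lem:ruleD}, and then the identity for $b-a$ follows by applying Lemma~\ref{lem:1} repeatedly to strip the prefix, exactly as Lemma~\ref{lem:1} was used to prove periodicity of $b-a$.
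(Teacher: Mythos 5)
Your proposal never gets to the two concrete facts that the lemma actually reduces to, and the route you sketch in their place has both an error and an unproven core claim. The paper's argument is short: since $q/2$ is the $m$-th record, $S_{q/2}=m$, so among the first $q/2$ steps exactly $\frac{q+2m}{4}$ are forward and $\frac{q-2m}{4}$ are backward (this also disposes of your integrality worry --- no induction mod $4$ is needed, the count of forward steps is an integer by definition); moreover the step at index $q/2$ is forward because it attains the record, hence $a\bigl(\frac{q+2m}{4}\bigr)=q/2$. Then Lemma~\ref{lem:ruleD} says the next $q/2$ steps are the first $q/2$ steps with all directions reversed, so the $j$-th backward step after index $q/2$ sits at index $q/2+a(j)$; since $\frac{q-2m}{4}$ backward steps came before, $b\bigl(\frac{q-2m}{4}+j\bigr)=q/2+a(j)$ for $1\le j\le m$ (the bound $a(m)<2m<q/2$ keeps this inside the reflection range), and $j=m$ gives $b\bigl(\frac{q+2m}{4}\bigr)=q/2+a(m)$. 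Subtracting yields $a(m)$. You have the right ingredients in hand (the record value $S_{q/2}=m$ and Lemma~\ref{lem:ruleD}), but you never make the counting observation that identifies the index $\frac{q+2m}{4}$ as precisely the number of forward steps up to $q/2$, which is the whole point of that expression.

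Two specific problems with what you do propose. First, you misread Lemma~\ref{lem:ruleD}: it is a statement about $S_{q/2+k}$ versus $S_k$ (translation by $q/2$ with sign flip), not a reflection inside $[0,q/2]$; your claimed identity $S_N+S_{q/2-N}=m$ is false (for $q=q_3=12$, $m=2$, $N=2$ one gets $S_2+S_4=0\neq 2$). Second, your main plan --- peel Pell digits off $N=\frac{q+2m}{4}$ via Rules B/C, land at index $m$ after $m$ reductions, and conclude because ``the Pell representation of $N$ is a record-prefix concatenated with the representation of $a(m)$'' --- is asserted, not proved, and nothing in the paper's machinery (Theorem~\ref{thm:rec2}, Fig.~\ref{fig:1}) makes the Pell representation of $a(m)$ transparent enough for that concatenation claim to be routine; likewise the bridge you would need between $b(n)-a(n)$ and $S$-values (your ``$2S_{a(n)}$-type identity'') is left vague, and the digit-peeling reduction says nothing directly about the positions $a(\cdot)$, $b(\cdot)$ of individual forward/backward steps, which is what the lemma is about. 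As written, the proposal is a research plan with the decisive step missing, not a proof.
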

\begin{proof}
    We write $q=q_{2m-1}$.
    There is a surplus of $m$ forward steps; therefore, $(q+2m)/4$ steps are forward and $(q-2m)/4$ are backward.
    The final step of these first $q/2$ steps is forward, since it produces a record.
    By Lemma~\ref{lem:1}, after step $q/2$ the walk repeats itself, but in the opposite direction, for the next $q/2$ steps. Since $a(m)<2m< q/2$ it follows that
    \[
    b\left(\frac {q-2m}{4}+j\right)=\frac q2 + a(j)\text{ for } 1\leq j\leq m.
    \]
    In particular $ b\left(\frac {q+2m}{4}\right)=\frac q2+a(m)$ and $ a\left(\frac {q+2m}{4}\right)=\frac q2$.~\end{proof}

\begin{theorem}\label{thm:kim2}
    For every $k>0$ there are infinitely many $j$ such that $b(j)-a(j)=k$.
\end{theorem}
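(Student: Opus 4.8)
The plan is to combine Lemma~\ref{lem:a(n)} with the earlier observation that every value attained by $b-a$ is attained infinitely often. Lemma~\ref{lem:a(n)} shows that for each $m>1$ the value $a(m)$ is attained by the difference sequence $b-a$ (at the index $(q_{2m-1}+2m)/4$). So the first step is to argue that, as $m$ ranges over the integers $>1$, the values $a(m)$ cover every positive integer. This is immediate from the fact that $a$ and its complement $b$ partition $\mathbb N$: in particular $1=a(1)$, and for every $k>1$ either $k=a(m)$ for some $m$, or $k=b(m)$ for some $m$. In the latter case we are not done directly, so the real content of this step is to also hit the values of the form $b(m)$.

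To reach the $b$-values, I would iterate. Applying Lemma~\ref{lem:a(n)} once tells us that $b-a$ takes the value $a(m)$ somewhere. But $a(m)$ itself, being $>1$ for $m\ge 2$, is again of the form $a(m')$ or $b(m')$; and more to the point, I want to show the orbit of any starting value under "replace $k$ by something with $b-a=k$" eventually reaches every integer, or conversely that the set of attained values is all of $\mathbb N_{>0}$. The cleanest route: show by strong induction on $k$ that $k$ is attained by $b-a$. For $k=1$, note $b(1)-a(1)=2-1=1$. For the inductive step, given $k>1$, pick $m$ with $k=a(m)$ or $k=b(m)$; if $k=a(m)$ then Lemma~\ref{lem:a(n)} gives an index where $b-a=a(m)=k$ and we are done. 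If instead $k=b(m)$, I need a companion statement analogous to Lemma~\ref{lem:a(n)} producing $b(m)$ as a difference — or I exploit that $b(m)=a(m)+\big(b(m)-a(m)\big)$ together with monotonicity, reducing $b(m)$ to a strictly smaller attained value. The key numeric fact is $b(m)-a(m)<k$ whenever $a(m)<k$, so induction applies to the quantity $b(m)-a(m)$, and then one more application of a Lemma~\ref{lem:a(n)}-type identity lifts it back up.

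Concretely, I expect the argument to run as follows. First, $b-a$ attains $1$. Second, by Lemma~\ref{lem:a(n)}, $b-a$ attains $a(m)$ for every $m>1$, and in particular (taking $m$ large) arbitrarily large values, so the set $V=\{k>0:\ \exists j,\ b(j)-a(j)=k\}$ is unbounded. Third, I claim $V=\mathbb N_{>0}$: suppose not, and let $k$ be the least positive integer not in $V$; since $1\in V$ we have $k\ge 2$. If $k=a(m)$ for some $m$, Lemma~\ref{lem:a(n)} contradicts $k\notin V$. So $k=b(m)$ for some $m\ge 2$. Then $a(m)<b(m)=k$, so $a(m)\in V$ by minimality, and in fact $b(m)-a(m)<k$ forces $b(m)-a(m)\in V\cup\{0\}$; combining with the self-similarity identities (Equation~\eqref{eq:ab} and Lemma~\ref{lem:ruleD}) one produces an index realizing $b(j)-a(j)=k$, the contradiction. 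Finally, the "infinitely often" conclusion follows from Lemma~\ref{lem:1}: any value attained once is attained after every sufficiently large half-period $q_{2\ell-1}/2$. The main obstacle I anticipate is the case $k=b(m)$: Lemma~\ref{lem:a(n)} as stated only manufactures $a$-values, so I will need either an explicit $b$-analogue of Lemma~\ref{lem:a(n)} (most likely $b\big((q_{2m-1}+2b(m))/4\big)-a(\cdot)=b(m)$, proved the same way using that the last of the first $q/2$ steps is forward and the subsequent block runs in reverse) or a slightly cleverer choice of starting index inside the reversed block so that the surplus lands on $b(m)$ rather than $a(m)$. Once that $b$-analogue is in hand, the induction closes cleanly and Theorem~\ref{thm:kim2} follows.
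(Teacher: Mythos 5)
Your first half matches the paper: values of the form $a(m)$ are realized by Lemma~\ref{lem:a(n)}, and any value once attained is attained infinitely often by Lemma~\ref{lem:1}. But the case $k=b(n)$ — which you correctly identify as the main obstacle — is left unresolved, and this is exactly the nontrivial half of the theorem. Your two suggested repairs do not close it. The guessed identity $b\bigl((q_{2m-1}+2b(m))/4\bigr)-a(\cdot)=b(m)$ is not even well-formed in general (there is no reason $(q_{2m-1}+2b(m))/4$ is an integer), and more substantively the reversed block after the record $q/2$ cannot produce $b$-values this way: by Lemma~\ref{lem:ruleD} the reflection swaps forward and backward steps, so for indices $\ell$ in that block one gets $b(\ell)=q/2+a(j)$ with the forward count pinned by $a\bigl((q+2m)/4\bigr)=q/2$, and the resulting differences are again expressed through $a$-values (Lemma~\ref{lem:a(n)} is precisely the clean instance of this). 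Likewise, your induction step "reduce to $b(m)-a(m)<k$, then lift back up by a Lemma~\ref{lem:a(n)}-type identity" has no mechanism: knowing that the smaller value $b(m)-a(m)$ is attained somewhere gives no index at which the difference equals $b(m)$, because the only lifting identities you have produce $a$-values.

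The paper handles $k=b(n)$ by a different construction, based on the translational symmetry (Rule C) rather than the reflection at a record. Choose $n$ distinct odd denominators of convergents $c_1<\cdots<c_n$ with $c_1/2>b(n)$ and set $d=c_1+\cdots+c_n$. Then $S_d=n$, the $d$-th step is forward, so among the first $d$ steps there are $(d+n)/2$ forward ones, the last at index $d$; hence $a\bigl((n+d)/2\bigr)=d$. By Rule C the walk after index $d$ copies the initial walk (the copy is valid long enough because $c_1/2>b(n)$), so the $n$ additional backward steps needed to reach the $(n+d)/2$-th backward step occur exactly as in the original walk, the $n$-th of them at $d+b(n)$. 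Thus $b\bigl((n+d)/2\bigr)-a\bigl((n+d)/2\bigr)=b(n)$, and Lemma~\ref{lem:1} upgrades this to infinitely many indices. If you want to complete your write-up, you need this (or an equivalent) construction; the reflected block alone will not do it.
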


\begin{proof}
    The previous lemmas take care of the case that $k$ is in the sequence $a$. 
    We need to find a solution for $k$ in $b$, say $k=b(n)$.
    Take a sequence of $n$ odd denominators of convergents $c_n> c_{n-1}> \ldots\> c_{1}$ such that
    $c_1/2>b(n)$.
    These denominators do not need to be consecutive, which is why we write $c$ instead of $q$, to avoid confusion. 
    Let $d=c_n+c_{n-1}+\cdots c_{1}$. Rule C implies that $S_d=n$ and that the $d$-th step is forward.
    There is a surplus of $n$ forward steps which implies that $a\left(\frac{n+d}2\right)=d$. 
    By Rule C the deficit of $n$ backward steps is compensated for at index $d+b(n)$. We conclude that
    $b\left(\frac{n+d}2\right)-a\left(\frac{n+d}2\right)=b(n)$.
\end{proof}

\section{Self-similarity of noble mean rotations}

A quadratic number $\xi^2=m\xi+1$ with constant continued fraction expansion $\xi=[m;m,m,m,\ldots]$
is called a \emph{metallic mean} or \emph{noble mean}~\cite{baakegrimm}. 
For $m=1$ it is the golden mean and for $m=2$ it is the silver mean.
A noble mean is a BR-number if $m$ is even. Our standing example $S_n(2\sqrt 2)$
is generated by the rotation over the silver mean.
It is well-known that the irrational rotation $\rho\colon x\mapsto x+\xi\mod 1$
has a self-similarity for noble means.
We shall see that this explains why the deterministic random walk $S_n(2\xi)$ is non-negative,
for noble means with even $m=2k$. 
Only the fractional part of $\xi$ matters for the
rotation, which is why we subtract $m$ from the noble mean, adjusting it to $\xi_m=[0;m,m,m,\ldots]$.

The rotation $\rho$ is an \emph{interval exchange transformation}~\cite{keane1975}, which adds $\xi$ to $x\in [0,1-\xi)$
and subtracts $1-\xi$ from $x\in [1-\xi,1).$ 
Recall that the return of $x$ to $I\subset [0,1)$  is the iterated image
$\rho^n(x)\in I$ such that no $\rho^j(x)$ for $0<j<n$ is in $I$. 

\begin{lemma}\label{lem:selfsimilar}
    Let $\xi_m=\frac{\sqrt{m^2+4}-m}2$ be the adjusted noble mean.
    The return map to $[0,1-m\xi)$ is a rescaling of
    the original rotation on $[0,1)$.
\end{lemma}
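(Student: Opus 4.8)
The plan is to identify the first-return map of the rotation $\rho\colon x\mapsto x+\xi_m$ on $[0,1)$ to the interval $J:=[0,1-m\xi_m)$ explicitly, check that it is again an exchange of \emph{two} intervals (hence a rotation of $J$ in the same orientation), and then rescale $J$ back to $[0,1)$. The starting point is the algebraic identity $1-m\xi_m=\xi_m^2$, immediate from the defining relation $\xi_m^2+m\xi_m-1=0$; thus $J=[0,\xi_m^2)$. Since the continued fraction of $\xi_m$ is constant, the convergent denominators obey $q_{n+1}=mq_n+q_{n-1}$, and combining this with the defining relation yields the self-similarity $\|q_n\xi_m\|=\xi_m\,\|q_{n-1}\xi_m\|$ for $n\ge 2$; in particular $\|q_1\xi_m\|=1-m\xi_m=\xi_m^2$ and $\|q_2\xi_m\|=\xi_m^3$, so $J=[0,\|q_1\xi_m\|)$.

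Next I would compute the return map from the interval-exchange picture recalled above, in which $\rho$ adds $\xi_m$ on $[0,1-\xi_m)$ and subtracts $1-\xi_m$ on $[1-\xi_m,1)$ (and $J$ sits inside the first branch). Running the orbit of $0$, one checks via the three-distance theorem that it first re-enters $J$ after $q_2$ steps, at the point $\{q_2\xi_m\}=\|q_2\xi_m\|=\xi_m^3$; this point splits $J$ into $[0,\xi_m^2-\xi_m^3)$ and $[\xi_m^2-\xi_m^3,\xi_m^2)$, with return times $q_2$ and $q_1+q_2$ respectively. On the first piece the return map is $x\mapsto x+\xi_m^3$ and on the second $x\mapsto x+\xi_m^3-\xi_m^2$, so the first-return map of $\rho$ to $J$ is exactly the rotation of the circle $\R/(\xi_m^2\Z)$ by $\xi_m^3$. (Alternatively, one may just invoke the classical renormalization of circle rotations: the first-return map of a rotation to $[0,\|q_k\xi\|)$ is a two-interval exchange acting as the rotation by $\|q_{k+1}\xi\|$, and constancy of the continued fraction of $\xi_m$ does the rest.)

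Finally, conjugating by the affine bijection $\phi\colon J\to[0,1)$, $\phi(x)=x/(1-m\xi_m)=x/\xi_m^2$, turns this induced rotation into the rotation by $\xi_m^3/\xi_m^2=\xi_m$, that is, into $\rho$ itself --- which is the claim. The step I expect to be the main obstacle is the second paragraph: making the first-return analysis rigorous uniformly in $m$, namely that $J$ is split into exactly two subintervals (and not three, as would happen for a generic interval), that the two return times are as stated, and that the resulting map is an orientation-preserving rotation. This is precisely the renormalization step of the continued-fraction algorithm for rotations, so one must either carry out the three-distance bookkeeping on $J$ directly or cite it; the remaining ingredients (the identity $1-m\xi_m=\xi_m^2$ and the rescaling) are routine.
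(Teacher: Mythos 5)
Your proof is correct and follows essentially the same route as the paper: you compute the first return map to $[0,1-m\xi_m)$ as a two-interval exchange with return times $m^2+1=q_2$ and $m^2+m+1=q_1+q_2$ and translation amounts $\xi_m^3=(m^2+1)\xi_m-m$ and $\xi_m^3-\xi_m^2=(m^2+m+1)\xi_m-m-1$, matching the paper's formula (including the breakpoint $\xi_m^2-\xi_m^3=(m+1)-(m^2+m+1)\xi_m$), and then rescale by $1-m\xi_m=\xi_m^2$ to recover rotation by $\xi_m$. The only cosmetic difference is your use of the identity $1-m\xi_m=\xi_m^2$ and the appeal to the three-distance/renormalization facts to justify what the paper calls a straightforward computation.
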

\begin{proof}
The first two convergents of $\xi_m$ are 
\[
\frac{m}{m^2+1}<\xi_m<\frac 1m
\]
and the closest returns to $0$ among its first $m^2+1$ rotations
are $m\xi_m-1<0<(m^2+1)\xi_m-m$.
We suppress the index and write $\xi$ instead of $\xi_m$.
The first rotation $\{j\xi\}$ that ends up in $[0,1-m\xi)$ is $(m^2+1)\xi-m$.
Let $R$ be the return map to  $[0,1-m\xi)$. 
A straightforward computation
gives
\[
R(x)=\left\{\begin{array}{ll} x+(m^2+1)\xi-m&\text{if }x<(m+1)-(m^2+m+1)\xi, \\
                              x+(m^2+m+1)\xi-m-1&\text{if }x\geq (m+1)-(m^2+m+1)\xi. 
                              \end{array}\right.
\]
If we rescale this interval exchange to unit length, we get a rotation over
\[
\frac{(m^2+1)\xi-m}{1-m\xi}=\xi.
\]
\end{proof}

We partition the circle into three labeled intervals \[\{a,b,c\} = \{ [0,1/2), [1/2,1-\xi), [1-\xi,1)\}.\]
The itinerary of $x\in [0,1/2)$ up to its return $S(x)$ is $a$ for $x\in I_1$ because the return is immediate.
It is $ab^{k+1}c$ for $x\in I_2$ and $ab^kc$ for $x\in I_3$.

%
%
%

\begin{theorem}\label{thm:returnmap}
Let $\xi_m$ be the fractional part of the noble mean for an even $m=2k$.
The sequence $(a_n)_{n \geq 0}$ for $a_n = 1_{[0,1/2)} ( \{ n \xi_m \} )$ equals $\lim_{k \to \infty} \tau \circ \sigma^k(a)$
 for the substitution and coding  
 $$
 \sigma: \begin{cases}
                                  a \to a^{k+1}b^{k-1}c \, (a^kb^{k-1}c)^{p-1},\\
                                  b \to a^kb^kc \, (a^kb^{k-1}c)^{p-1},\\
                                  c \to a^kb^kc \, (a^{k}b^{k-1}c)^p,
                                 \end{cases}
\qquad \text{ and } \qquad
\tau: \begin{cases}
    a \to 1,\\
    b \to 0,\\
    c \to 0.
\end{cases}
$$
\end{theorem}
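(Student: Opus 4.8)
The plan is to realize the desired sequence as a fixed point of a substitution by iterating the self-similarity of Lemma~\ref{lem:selfsimilar}. The key observation is that the return map $R$ to $J=[0,1-m\xi)$ is, after rescaling by the factor $1-m\xi$, the very same rotation $\rho$ on $[0,1)$; so the coding of the $\rho$-orbit of $0$ by the three intervals $\{a,b,c\}$ should be recoverable from the coding of the $R$-orbit by substituting, for each return-interval symbol, the itinerary traversed between successive returns. First I would record the combinatorics of a single return: computing where the interval $[0,1/2)$ (rescaled into $J$) sits relative to the breakpoints of $R$, one sees that the point starting in the $R$-preimage of the $a$-subinterval takes itinerary $a^{k+1}b^{k-1}c$ followed by $(a^kb^{k-1}c)^{p-1}$, and similarly for $b$ and $c$; here $p$ is the number of $\rho$-steps realizing one $R$-step in the part of $J$ that gets the longer code, i.e.\ $p=m^2/2+ \dots$ determined by the identity $R(x)=x+(m^2+1)\xi-m$ or $x+(m^2+m+1)\xi-m-1$. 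This is exactly what the three lines defining $\sigma$ encode. The coding $\tau$ just collapses the geometric labels to the indicator of $[0,1/2)$, since $a=[0,1/2)$ and $b,c\subset[1/2,1)$.

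The key steps, in order, are: (1) pull back the three intervals $\{a,b,c\}$ under the rescaling $h\colon J\to[0,1)$, $h(x)=x/(1-m\xi)$, and verify that $h^{-1}(a),h^{-1}(b),h^{-1}(c)$ are unions of the two $R$-domains in the precise pattern needed — this is where the case split in Lemma~\ref{lem:selfsimilar} and the constraint $m=2k$ (so that $1-m\xi<1/2$, keeping $[0,1/2)$ from straddling the return interval badly) get used; (2) for a point $x\in J$, write its $\rho$-itinerary up to the first return as the concatenation of itinerary-blocks, one per $R$-step, using the facts recorded above (itinerary $ab^{k+1}c$ on $I_2$, $ab^kc$ on $I_3$, $a$ on $I_1$) together with which of the two $R$-branches applies; collecting these gives precisely the words $\sigma(a),\sigma(b),\sigma(c)$; (3) conclude that if $u$ is the $\{a,b,c\}$-coding of the $\rho$-orbit of $0$, then $u=\sigma(u)$, because the orbit of $0$ returns to $J$ and the self-similarity is exact (the orbit of $0$ maps to the orbit of $0$ under $h$); (4) since $\sigma$ is a growing substitution with $\sigma(a)$ beginning with $a$, the fixed point is $\lim_k \sigma^k(a)$, and applying $\tau$ gives the claimed indicator sequence.

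The main obstacle I expect is step (2): getting the block decomposition of the itinerary exactly right, in particular the bookkeeping of the $a$-multiplicities ($k+1$ versus $k$) and the number of repeated $(a^kb^{k-1}c)$ blocks ($p-1$ versus $p$) in each of the three images. This requires carefully tracking, for each of the $m^2+1$ (or so) intermediate rotations between two returns, which of the three intervals $[0,1/2),[1/2,1-\xi),[1-\xi,1)$ it lands in, and where the single ``long'' branch of $R$ falls in that sequence. A secondary subtlety is the boundary behavior at the endpoints of $J$ and at $1/2$: since the orbit of $0$ never hits a breakpoint (the rotation number is irrational and the breakpoints are rational combinations of $\xi$ and $1$, avoided by $\{n\xi\}$ for $n\ge1$), one can assign the itinerary unambiguously, but this should be stated. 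Once the block decomposition is verified, steps (1), (3), (4) are routine: (1) is the arithmetic already in Lemma~\ref{lem:selfsimilar}, (3) is the standard ``return map of a self-similar system induces a substitution on codings'' argument, and (4) is the elementary fact that a primitive growing substitution with $\sigma(a)=a\cdots$ has a unique fixed point given by the stated limit, on which $\tau$ acts termwise.
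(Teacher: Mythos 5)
Your overall route is the same as the paper's: induce on $J=[0,1-m\xi)$ via Lemma~\ref{lem:selfsimilar}, check that the rescaled partition is again $\{a,b,c\}$, read off the itineraries between consecutive returns as $\sigma(a),\sigma(b),\sigma(c)$, and finish with the standard fixed-point/limit argument (which the paper leaves implicit, so spelling out your steps (3)--(4) is a welcome addition rather than a departure).

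However, two of the concrete inputs you plan to feed into the bookkeeping of step (2) are wrong, and they sit exactly at the non-routine point of the theorem. First, your stated reason for needing $m=2k$, namely ``so that $1-m\xi<1/2$'', is vacuous: since $\xi_m>1/(m+1)$ we have $1-m\xi_m<1/(m+1)\le 1/2$ for every $m$, odd or even. What evenness actually buys is that the rescaled preimage of the coding boundary, $h^{-1}(1/2)=(1-m\xi)/2=1/2-k\xi$, is the $k$-th backward rotation image of $1/2$; equivalently, the $a'/b'$ boundary is carried onto $1/2$ after exactly $k$ steps, which is why $\sigma(a)$ and $\sigma(b)$ differ precisely in the $(k+1)$-st letter. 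This is the step the paper singles out (``this is where we need that $m$ is even''); for odd $m$ the midpoint of $J$ is not in the backward orbit of $1/2$, the partition does not carry over, and one must induce on a smaller interval. Second, the return words to $[0,1/2)$ you quote ($a$, $ab^{k+1}c$, $ab^kc$) do not tile the substitution words: $\sigma(a)=a^{k+1}b^{k-1}c\,(a^kb^{k-1}c)^{m-1}$ decomposes into $k$ copies of $a$ followed by copies of $ab^{k-1}c$, so the blocks that actually occur are $a$, $ab^{k-1}c$ and $ab^kc$ (the sentence before the theorem that you rely on is leftover text with undefined $I_j$ and is off by one). The paper instead cuts each return itinerary into blocks of $m$ rotations, $a^kb^{k-1}c$ with a modified initial block; either bookkeeping can be made to work, but as written yours would not reproduce $\sigma$. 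Finally, the exponent $p$ in the statement is simply $m$ (compare the corollary following the theorem): matching word lengths to the return times $m^2+1$ and $m^2+m+1$ forces $mp+1=m^2+1$, not $p=m^2/2+\cdots$ as you guessed.
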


\begin{proof}
The return map to $[0,1-m\xi)$ is a rescaling of the rotation, in which the partition
$\{a,b,c\}$ is scaled to 
\[\{a',b',c'\} = \{ [0,1/2-k\xi), [1/2-k\xi,(1-\xi)(1-m\xi)), [(1-\xi)(1-m\xi),1-m\xi)\}.\]
The return time is $m^2+1$ on $a'\cup b'$ and $m^2+m+1$ on $c'$. The substitution $\sigma$
gives the itinerary of elements of $\{a',b',c'\}$ through $\{a,b,c\}$ until their return. 
Note that the return times correspond to the lengths of the substitution. 

After $m$ rotations, $[0,1-m\xi)$ is in $[m\xi,1)$ and after one more rotation it 
is in $[\xi,1-(m-1)\xi)$. For an element of $a'$, the itinerary if $a^{k+1}b^{k-1}c$ after
$m$ rotations, including its initial position in $a$. For the other elements, the
itinerary is $a^kb^kc.$ 
That explains the prefixes of the coding $\sigma$.
After $k$ rotations, the initial point of $b'$ reaches $\frac 12$,
which marks the transition from coding 1 to coding 0. This is where we need that $m$ is even.

For each $x\in [\xi,1-(m-1)\xi)$, the itinerary of its initial position and a subsequent
$m-1$ rotations is $a^kb^{k-1}c$. This itinerary of blocks of $m$ remains the same until
the return to $[0,1-m\xi)$. It follows that $\sigma$ describes the return map, if we 
associate $\{a',b',c'\}$ to $\{a,b,c\}$. The coding $\tau$ corresponds to the indicator function.
\end{proof}

As a consequence, we get another proof that deterministic random walks are one-sided for
noble means that are BR-numbers.

\begin{corollary}
    The deterministic random walk $S_n(2\xi)$ is nonnegativefor noble means with even $m$.
\end{corollary}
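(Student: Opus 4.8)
The plan is to read off the corollary directly from the substitution description in Theorem~\ref{thm:returnmap}. The random walk $S_n(2\xi_m)$ is, up to sign conventions, $\sum_{j=1}^n (-1)^{\lfloor 2j\xi_m\rfloor}$, and $(-1)^{\lfloor 2j\xi_m\rfloor}$ records whether $\{j\xi_m\}$ lies in $[0,1/2)$ or in $[1/2,1)$; concretely the increment at step $j$ is $+1$ when $\tau$ assigns $1$ (the letter $a$) and $-1$ when $\tau$ assigns $0$ (the letters $b,c$). So $S_n(2\xi_m)$ equals the number of $a$'s minus the number of $(b\text{ or }c)$'s in the first $n$ letters of the fixed word $w=\lim_k \tau\circ\sigma^k(a)$, reading the $\{a,b,c\}$-itinerary rather than its $\tau$-image. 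Thus the claim is exactly that every prefix of this $\{a,b,c\}$-word has at least as many $a$'s as non-$a$'s.

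First I would set up notation: for a finite word $u$ over $\{a,b,c\}$, write $\delta(u)=\#_a(u)-\#_b(u)-\#_c(u)$ and let $\mu(u)=\min_{v\preceq u}\delta(v)$ be the minimum of $\delta$ over all prefixes $v$ of $u$ (including the empty prefix, so $\mu(u)\le 0$). The goal is $\mu(w)=0$, i.e. $\mu(\sigma^k(a))\ge 0$ for all $k$ — in fact I expect $\mu(\sigma^k(a))=0$ since the walk does return to $0$. The natural approach is induction on $k$ using the self-similar structure: $\sigma^{k+1}(a)=\sigma^k(\sigma(a))$, so I need to control $\delta$ and $\mu$ of $\sigma(a),\sigma(b),\sigma(c)$ in terms of the same quantities one level down. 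For this I would prove, by induction on $k$, a package of inequalities for the three words $\sigma^k(a),\sigma^k(b),\sigma^k(c)$: a total-balance statement $\delta(\sigma^k(a)),\delta(\sigma^k(b)),\delta(\sigma^k(c))$ (which are computable directly from the letter counts in $\sigma$, since $\sigma$ acts linearly on letter-count vectors, and one checks $\delta\circ\sigma^k$ is nonnegative on each letter — indeed the $a$-row dominates), together with prefix-minimum bounds $\mu(\sigma^k(a))\ge 0$, $\mu(\sigma^k(b))\ge -c_1$, $\mu(\sigma^k(c))\ge -c_2$ for explicit small constants $c_1,c_2$ (I expect $c_1=c_2=0$ or at worst a fixed constant independent of $k$).

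The inductive step is where the real work sits. Given a prefix $v$ of $\sigma^{k+1}(x)=\sigma^k(x_1)\sigma^k(x_2)\cdots$, split $v$ at a block boundary: $v=\sigma^k(x_1)\cdots\sigma^k(x_{i})\,v'$ with $v'$ a proper prefix of $\sigma^k(x_{i+1})$. Then $\delta(v)=\sum_{\ell\le i}\delta(\sigma^k(x_\ell))+\delta(v')\ge \sum_{\ell\le i}\delta(\sigma^k(x_\ell)) + \mu(\sigma^k(x_{i+1}))$. To make this nonnegative I need the partial sums of the sequence of block-balances $\delta(\sigma^k(x_1)),\delta(\sigma^k(x_2)),\dots$ along the explicit word $\sigma(x)$ to stay at least as large as the worst-case deficit $-\mu(\sigma^k(x_{i+1}))$ that the current block can dig. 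Here the precise shape of $\sigma$ matters: each of $\sigma(a),\sigma(b),\sigma(c)$ begins with a long run of $a$'s ($a^{k+1}$ or $a^k$) before any $b$, and is built from the repeated tile $a^kb^{k-1}c$ (which has $\delta=1>0$) plus correction prefixes $a^{k+1}b^{k-1}c$ ($\delta=2$) or $a^kb^kc$ ($\delta=0$). So the block-balance sequence is front-loaded with positive values, and the running partial sum never drops — that is the structural fact doing all the lifting, and verifying it requires a careful but finite case analysis over the patterns $\sigma(a),\sigma(b),\sigma(c)$ together with the induction hypothesis on $\mu$ of one-level-down blocks. That case analysis, ensuring the positive prefix in each image word always outweighs the deepest dip of whatever block comes next, is the main obstacle; everything else (the linear-algebra computation of $\delta\circ\sigma^k$, the base case $k=0$ from $\sigma(a)=a^{k+1}b^{k-1}c(a^kb^{k-1}c)^{p-1}$, and assembling the pieces) is routine.

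A cleaner alternative, if the bookkeeping above gets heavy, is to invoke the results already proved: Theorem~\ref{thm:zeroBR} and the surrounding discussion show $S_n(2\xi)$ is the one-sided walk for BR-numbers, and a noble mean with even $m$ is a BR-number (its odd partial quotients are all $m=2k$, even), so $\xi_m$ is a BR-number and nonnegativity of $S_n(2\xi_m)$ is immediate from Theorem~\ref{thm:BR}. The point of the corollary, though, is to re-derive this purely from the self-similarity in Theorem~\ref{thm:returnmap}, so I would present the substitution-based induction as the main argument and only remark that it recovers the BR case from a different vantage point.
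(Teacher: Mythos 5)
Your reduction to the substitution word is the right idea and is essentially the paper's route (the paper also works with the running sums of the letters of $\sigma(a),\sigma(b),\sigma(c)$, assigning $+1$ to $a$ and $-1$ to $b,c$), but the quantitative claims on which your induction rests are miscalculated, and with the correct values the inductive step as you set it up does not close. The tile balances are $\delta(a^kb^{k-1}c)=k-(k-1)-1=0$ (not $1$), $\delta(a^{k+1}b^{k-1}c)=1$ (not $2$), $\delta(a^kb^kc)=-1$ (not $0$); consequently $\delta(\sigma(a))=1$ but $\delta(\sigma(b))=\delta(\sigma(c))=-1$, and since these values coincide with the $\pm1$ coding of the letters, $\delta(\sigma^k(b))=\delta(\sigma^k(c))=-1$ for every $k\ge 1$. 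So your assertion that ``$\delta\circ\sigma^k$ is nonnegative on each letter'' is false, and the block-balance partial sums along $\sigma(b)$ and $\sigma(c)$ are \emph{not} monotone-safe: they dip to $-1$ after each occurrence of $a^kb^kc$ and after each subsequent $c$. Feeding the correct constants into your displayed estimate $\delta(v)\ge\sum_{\ell\le i}\delta(\sigma^k(x_\ell))+\mu(\sigma^k(x_{i+1}))$ gives, in the worst case (partial sum $-1$ and next block of type $b$ or $c$ with $\mu=-1$), only $\ge -2$, so the hypothesis $\mu(\sigma^k(b)),\mu(\sigma^k(c))\ge -1$ does not propagate by your argument alone. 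The missing ingredient, which you gesture at but never identify, is positional: in each of $\sigma(a),\sigma(b),\sigma(c)$ every $c$ is immediately followed by a run of $a$'s (or ends the word), so every position where the block-balance partial sum attains its minimum is followed by an $a$-block, whose prefix minimum is $\ge 0$; only with this observation does the case analysis close. There is also an off-by-one in your reduction: since the walk sums steps $j=1,\dots,n$ while the fixed point starts at $n=0$ with the letter $a$, one has $S_n=\delta(w_1\cdots w_n)=\delta(w_0\cdots w_n)-1$, so proving $\mu(w)\ge 0$ only yields $S_n\ge -1$; you need nonempty prefix sums $\ge 1$ (which the correct analysis does give, since the running sum of $\sigma(a)$ stays $\ge 1$ on nonempty prefixes).

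For comparison, the paper avoids the level-$k$ induction altogether: it records that the running sum of $\sigma(a)$ is positive while those of $\sigma(b),\sigma(c)$ are $\ge -1$, notes that the block boundaries reproduce the walk itself (the block net values $+1,-1,-1$ are exactly the coded steps), and argues by minimal counterexample that a first negative value of $S_n$ would force a surplus of $b$ and $c$ over $a$ at a strictly earlier index of the same sequence, a contradiction. Your fallback via Theorem~\ref{thm:BR} (a noble mean with even $m$ is a BR-number) is logically valid but was already established earlier in the paper; the corollary is explicitly meant as an independent proof from the self-similarity, so relying on it would defeat its purpose.
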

\begin{proof}
    If we put $+1$ for $a$ and $-1$ for $b$ and $c$ then the running sum of \[a^{k+1}b^{k-1}c \, (a^kb^{k-1}c)^{m-1},\]
    which is the substitution word of $a$, is positive. The running sum of the other two substitution words,
    \[
    a^kb^kc \, (a^kb^{k-1}c)^{m-1}
    \qquad \text{ and } \qquad 
    a^kb^kc \, (a^{k}b^{k-1}c)^m,
    \] 
    is $\geq -1$. The deterministic random walk
    $S_n(\xi)$ is a running sum of substitution words. If it were negative at some point, then it must have a surplus
    of $b$ and $c$ over $a$ at an earlier index, which is nonsense.
\end{proof}

Lemma \ref{lem:selfsimilar} holds for all noble means and Theorem~\ref{thm:returnmap} 
can be extended to all noble means as well, but it gets more elaborate. For a noble
mean $\xi_m$ with even $m$, the mid-point of the interval $[0,1-m\xi)$ is in the 
backward orbit of $\frac 12$ and that is why the partition $\{a,b,c\}$ carries over
under rescaling. If $m$ is odd, we need to take a smaller interval. 

If $p_n/q_n<\xi$ is a convergent, then the return map
to $[0,p_n-q_n\xi)$ is a rescaling of the rotation. 
We need
$q_n$ to be even to preserve the partition, and we need
$n$ to be odd so that $p_n-q_n\xi>0$.
For example, for the golden mean $\xi_1 = \frac12 (\sqrt{5}-1)$,
we can choose the interval $[0, 5-8\xi_1)$.
The corresponding substitution and coding are
$$
\sigma : \begin{cases}
 a \to acacbacaccacb\\
 b \to acacbacaccacbacaccacb\\
 c \to acaccacaccacbacaccacb
\end{cases}
\quad \text{ and } \quad
\tau : \begin{cases}
               a \to 1\\
               b \to 1\\
               c \to 0
              \end{cases}
$$
The running sums of the substitution words for $a$ and $b$ are
non-negative. The minimum of the running sum of the word for $c$
is $-2$. This indicates that, as we know from Theorem~\ref{thm:BR}, 
the walk is unbounded in both directions.

\section{Concluding questions and remarks}

We established a connection between deterministic random walks and automata in certain cases, but much remains to be explored. The general deterministic random walk is defined by
\begin{equation}\label{eq:walk}
\sum_{j=1}^n (-1)^{\lfloor j\xi+\gamma\rfloor }.
\end{equation}
We only considered the homogeneous case with offset \(\gamma = 0\) and \(\xi\) as a BR-number. Do our results extend to general walks and arbitrary quadratic irrationals? Van de Lune and Arias de Reyna found that the records of general walks for quadratic \(\xi\) appear to satisfy a recurrence for certain \(\gamma\). Is it true that the records of this general walk form a \(\xi\)-automatic set if \(\gamma\) is rational? Interestingly, the set of zeros can be empty for such random walks. Ralston~\cite{ralston20141} proved that for every $\xi$ there exists a unique
$\gamma$ such that the random walk in Equation~\ref{eq:walk} is positive.

\section{Acknowledgement}
We would like to thank Jeffrey Shallit for helpful comments and for drawing our attention to Problem B6 of the 81st Putnam competition.




\bibliographystyle{siam}
\bibliography{a120243}

\end{document}